\def\arXiv#1{arXiv:\href{http://arXiv.org/abs/#1}{#1}}
\def\MR#1{MR\href{http://www.ams.org/mathscinet-getitem?mr=#1}{#1}}
\def\C{\mathbb C}
\def\R{\mathbb R}
\def\RP{\mathbb {RP}}
\def\S{\mathbb S}
\def\theta{\vartheta}
\newtheorem{theorem}{Theorem}
\numberwithin{theorem}{section}
\newtheorem{conj}{Conjecture}
\numberwithin{conj}{section}
\newtheorem{corollary}{Corollary}
\numberwithin{corollary}{section}
\newtheorem{lemma}{Lemma}[section]
\numberwithin{lemma}{section}
\newtheorem{proposition}{Proposition}
\numberwithin{proposition}{section}
\title{Repeated minimizers of $p$-frame energies}
\author{Alexey Glazyrin}
\author{Josiah Park}
\address{School of Mathematical and Statistical Sciences, The University of Texas Rio Grande Valley, Brownsville, TX 78520}
\email{alexey.glazyrin@utrgv.edu}
\address{School of Mathematics, Georgia Institute of Technology, Atlanta, GA 30332-0160}
\email{j.park@gatech.edu}
\date{\today}
\begin{document}

\maketitle
\begin{abstract}
For a collection of $N$ unit vectors $\mathbf{X}=\{x_i\}_{i=1}^N$, define the $p$-frame energy of $\mathbf{X}$ as the quantity $\sum_{i\neq j} |\langle x_i,x_j \rangle|^p$. In this paper, we connect the problem of minimizing this value to another optimization problem, so giving new lower bounds for such energies. In particular, for $p<2$, we prove that this energy is at least $2(N-d) p^{-\frac p 2} (2-p)^{\frac {p-2} 2}$ which is sharp for $d\leq N\leq 2d$ and $p=1$. We also prove that for $1\leq m<d$, a repeated orthonormal basis construction of $N=d+m$ vectors minimizes the energy over an interval, $p\in[1,p_m]$, and demonstrate an analogous result for all $N$ in the case $d=2$. Finally, in connection, we give conjectures on these and other energies.
\end{abstract}

\section{Introduction}\label{sec:intro}

Let $\mathbf{A}=A_{i,j}$ be an $N\times N$ real matrix of rank less than or equal to $d$, and with ones along the diagonal. The $p$-frame energy of matrix $\mathbf{A}$ is denoted

$$E_p(\mathbf{A})=\sum\limits_{i\neq j} |A_{ij}|^p.$$


An interesting question is what the optimizing matrices for $E_p(\mathbf{A})$ are for fixed $p$, $N$, and $d$. Bukh and Cox in \cite{buk18} recently studied the question of bounding $E_{\infty}(\mathbf{A})=\max\{|A_{ij}|\}$ and its consequences. One special case of this problem concerns matrices associated with unit vectors, $\mathbf{X}=\{x_i\}_{i=1}^N\subset \R^d$, in which case $\mathbf{A}$ is the Gram matrix of $\mathbf{X}$ and so is additionally symmetric and positive semi-definite.

When $p$ is an even natural number, and $N$ is sufficiently large, sharp bounds for energies in the  real and complex case follow from the work of Sidelnikov and Welch \cite{sid74,wel74}. Several related results and conjectures on minimizers of $p$-frame energies were formulated by Ehler and Okoudjou in \cite{ehl12}. Using the fact that for $d=N$, the minimizer of $E_p(\mathbf{A})$ for $p=2$ is an orthonormal basis, they show that whenever $N$ is divisible by $d$ and $p\in(0,2)$, a repeated orthonormal basis is the unique minimizer. Since the value of the $p$-frame energy does not change when substituting a vector by its opposite, uniqueness in this scenario is meant up to symmetry. Alternatively, uniqueness can be understood by considering the energy on projective spaces $\RP^{d-1}$ (as is done in \cite{bil19a}), where the points in $\RP^{d-1}$ may be identified with lines through the origin in $\R^d$.

The problem of minimizing the 1-frame energy was also posed in \cite{mat17}, where it was conjectured that for any $N$, the repeated orthonormal basis is the unique minimizer. In 1959, Fejes T\'{o}th posed the question \cite{fej59}: what is the largest sum of non-obtuse angles formed by $N$ lines in $\mathbb{R}^d$? The conjecture stands that for any $N$ the maximum is uniquely attained on a collection of $d$ lines generated by a repeated orthonormal basis and is resolved only for $d=2$, and for very few cases of $N$ for $d=3$. The asymptotic result for $d=3$ is wide open (see \cite{bil19} and \cite{fod16} for recent progress). We note that the conjecture about $E_1$ from \cite{mat17} immediately follows from the conjecture of Fejes T\'{o}th since $\arccos t \geq \frac {\pi} 2 (1-t)$ for $t\in[0,1]$ with equality holding precisely at $t=0$ and $t=1$.

In this paper, we develop new methods for finding lower bounds on $E_p(\mathbf{A})$, based on the framework of Bukh and Cox from \cite{buk18}. Doing so allows us to prove new general bounds for $E_p(\mathbf{A})$ when $p<2$. Such bounds are sharp in some cases, particularly, for $p=1$ and $N\in [d+1, 2d]$. We also give sharp bounds for $N=d+m$ and $p\in[1,2\log\frac{2m+1}{2m}/\log\frac{m+1}{m}]$, thus, in particular, partially confirming a conjecture from \cite{che19}.

Although everything in Sections \ref{sec:aux}-\ref{sec:new} is formulated in the real case, all observations and proofs there work for matrices of complex numbers or quaternions without any changes. Our methods work for general matrix optimization problems, so most of our results will be formulated for matrices. However, we slightly abuse terminology when talking about vector sets instead of their Gram matrices while speaking of $E_p(\mathbf{A})$.

In Section \ref{sec:2dim}, we prove that the $p$-frame energy for unit vectors in the plane is minimized by repeated orthonormal bases for any number of vectors if $p\in[1,1.3]$. In Section \ref{sec:disc} we discuss possible generalizations of the results of the paper and motivations behind them.

\section{Auxiliary problems and tight frames}\label{sec:aux}

Bukh and Cox in \cite{buk18} introduced a method for deriving new packing bounds for projective codes. In our related approach we use the notion of a {\it tight frame}. A tight frame is a finite collection of vectors $\{y_i\}_{i=1}^N$ in $\R^d$ with the property that,
\begin{equation}\label{eq:tightframe} \sum\limits_{i=1}^N \langle x, y_i \rangle^2=A \|x\|^2,  \end{equation}
holds, for any $x\in\mathbb{R}^d$ and some $A>0$. $A$ is called a {\it frame constant} of the tight frame.

Using the tight frame condition (\ref{eq:tightframe}) and comparing coefficients for all $d$ components of $x$, one can conclude that $\sum_{i=1}^N \|y_i\|^2=Ad$. It is convenient to use the normalization $A=\frac{1}{d}$ for frames as above, so that the Hilbert-Schmidt norm of the $d\times N$ matrix $\mathbf{Y}$, with column vectors $\{ y_{i} \}_{i=1}^N$, is normalized to be $1$, 
\begin{equation}\label{eq:normalize} \|\mathbf{Y}\|^2_{\text{HS}}= \sum\limits_{i=1}^N \|y_i\|^2=1. \end{equation}
In the next two lemmas we collect instruments for computing new lower bounds for discrete $p$-frame energies. The first makes a connection between kernels of matrices and associated tight frames. We introduce the notation $f_{c,p}(t)=\left(\frac{t}{c-t} \right)^{\frac{p}{2}}$, to be used in the second lemma. We also introduce the optimization problem,
\begin{equation}\label{eq:mcpn} M(c,p,N)=\min\left\{\sum_{i=1}^N f_{c,p}(t_i)\right\vert\left. \sum_{i=1}^N t_i = 1,\ t_i\in[0,c)\right\}, \end{equation}
where $p>0$ and $c>1/N$. Clearly, this optimization problem is properly defined.

\begin{lemma}\label{lem:frame}
For any real $N\times N$ matrix $\mathbf{A}$ of rank $d$, $N\geq d+1$, with unit diagonal elements, there exists a tight frame $\{y_1, y_2,\ldots, y_N\}\subset\mathbb{R}^{N-d}$ with the frame constant $\frac 1 {N-d}$ such that $\text{Ker\,}\mathbf{A}$ consists of all vectors of the form $(\langle y, y_1\rangle,\ldots,\langle y, y_N\rangle )$ with $y\in\mathbb{R}^{N-d}$.  
\end{lemma}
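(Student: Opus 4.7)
The plan is to exhibit the frame by starting from an orthonormal basis of $\mathrm{Ker}\,\mathbf{A}$ viewed inside $\mathbb{R}^N$ and then reading off its columns after a global rescaling. Since $\mathbf{A}$ has rank $d$, its (right) kernel is a subspace of $\mathbb{R}^N$ of dimension exactly $N-d$. Pick any orthonormal basis $v_1,\dots,v_{N-d}$ of $\mathrm{Ker}\,\mathbf{A}$ and form the $(N-d)\times N$ matrix $\mathbf{V}$ whose rows are these basis vectors. By orthonormality of the rows, $\mathbf{V}\mathbf{V}^T=I_{N-d}$.

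Next, define the candidate frame vectors as the suitably normalized columns of $\mathbf{V}$:
$$y_i=\frac{1}{\sqrt{N-d}}\,\mathbf{V}_{\cdot i}\in\mathbb{R}^{N-d},\qquad i=1,\dots,N.$$
To check the tight frame property, compute
$$\sum_{i=1}^N y_iy_i^T=\frac{1}{N-d}\sum_{i=1}^N \mathbf{V}_{\cdot i}\mathbf{V}_{\cdot i}^T=\frac{1}{N-d}\,\mathbf{V}\mathbf{V}^T=\frac{1}{N-d}\,I_{N-d},$$
so that $\sum_i\langle x,y_i\rangle^2=\tfrac{1}{N-d}\|x\|^2$ for every $x\in\mathbb{R}^{N-d}$, which is exactly the tight frame condition (\ref{eq:tightframe}) with frame constant $\tfrac{1}{N-d}$. (As a consistency check, taking traces also recovers the normalization $\sum_i\|y_i\|^2=1$ from (\ref{eq:normalize}).)

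For the kernel characterization, observe that for any $y\in\mathbb{R}^{N-d}$ the vector $(\langle y,y_1\rangle,\dots,\langle y,y_N\rangle)$ equals $\tfrac{1}{\sqrt{N-d}}\,\mathbf{V}^T y$. As $y$ ranges over $\mathbb{R}^{N-d}$, the image $\mathbf{V}^T y$ traces out the column space of $\mathbf{V}^T$, i.e. the row space of $\mathbf{V}$, which is $\mathrm{span}(v_1,\dots,v_{N-d})=\mathrm{Ker}\,\mathbf{A}$ by construction. The scalar factor $\tfrac{1}{\sqrt{N-d}}$ is irrelevant since we are describing a linear subspace. Hence the described set of vectors fills $\mathrm{Ker}\,\mathbf{A}$ exactly.

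There is no real obstacle in this lemma: the content is a linear-algebraic translation between an $(N-d)$-dimensional subspace of $\mathbb{R}^N$ and an $N$-vector Parseval-type frame in $\mathbb{R}^{N-d}$, and the only choice that has to be made with care is the rescaling by $1/\sqrt{N-d}$ that converts the standard Parseval normalization into the frame constant $\tfrac{1}{N-d}$ used throughout the paper.
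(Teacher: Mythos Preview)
Your proof is correct and follows essentially the same approach as the paper: parametrize $\mathrm{Ker}\,\mathbf{A}$ by a linear map from $\mathbb{R}^{N-d}$ and read off the frame vectors as the coordinate functionals. The only cosmetic difference is that you pick an orthonormal basis of the kernel at the outset (so the tight frame condition is automatic after rescaling), whereas the paper starts with an arbitrary basis and then applies a change of variables $D$ to diagonalize the resulting quadratic form.
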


\begin{proof}
$\text{Ker\,}\mathbf{A}$ is $(N-d)$-dimensional so there is a linear mapping $L:\mathbb{R}^{N-d}\rightarrow\mathbb{R}^{N}$ whose image is $\text{Ker\,}\mathbf{A}$. For each of $N$ components, $L$ is a linear functional so it may be represented as $L_i (y)=\langle y,z_i \rangle$. We note that for any non-singular mapping $D:\mathbb{R}^{N-d}\rightarrow\mathbb{R}^{N-d}$, the image of the mapping $L\circ D$ is $\text{Ker\,}\mathbf{A}$ as well. The quadratic form $\sum_{i=1}^N \langle y,z_i\rangle^2$ is positive definite, and so by choosing a suitable $D$, we can transform $\{z_1,z_2,\ldots,z_N\}$ into $\{y_1,y_2,\ldots,y_N\}$ so that $\sum_{i=1}^N \langle y,y_i\rangle^2 = \frac 1 {N-d} \langle y,y \rangle$. So, we obtain the condition (\ref{eq:tightframe}), and $\{y_i\}_{i=1}^N$ is a tight frame.
\end{proof}

The construction in Lemma \ref{lem:frame} is due to Bukh and Cox \cite{buk18} who used it for obtaining new packing bounds for projective codes. It also can be interpreted as a tight frame representative of a Gale dual to the matrix $A$ (see \cite{gla19} for more details about this interpretation).

\begin{lemma}\label{lem:min}
For any real $N\times N$ matrix $\mathbf{A}$ of rank $d$, $N\geq d+1$, with unit diagonal elements,
$$E_p(\mathbf{A})\geq M\left(\frac 1 {N-d}, p, N\right)\ \text{ if }\ 1\leq p\leq 2,$$
$$E_p(\mathbf{A})\geq (N-1)^{1-\frac p 2} M\left(\frac 1 {N-d}, p, N\right)\ \text{ if }\ p\geq 2$$
\end{lemma}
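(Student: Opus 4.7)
The plan is to use Lemma~\ref{lem:frame} to translate the problem on $\mathbf{A}$ into one about a tight frame, and then to derive, row by row, an inequality relating $\sum_{j\neq i}|A_{ij}|^p$ to $f_{c,p}(\|y_i\|^2)$. Let $\{y_1,\ldots,y_N\}\subset\R^{N-d}$ be the tight frame supplied by that lemma, with frame constant $c:=\tfrac{1}{N-d}$, and set $s_i:=\|y_i\|^2$. The normalization \eqref{eq:normalize} forces $\sum_i s_i=1$, so $(s_1,\ldots,s_N)$ is feasible for the problem \eqref{eq:mcpn} defining $M(c,p,N)$.

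First I would extract two identities from this setup. Rewriting the tight-frame condition as $\sum_j y_jy_j^{T}=c\,I_{N-d}$ and sandwiching by $y_i$ gives, after isolating the $j=i$ term, $\sum_{j\neq i}\langle y_i,y_j\rangle^{2}=s_i(c-s_i)$. Separately, the description of $\text{Ker}\,\mathbf{A}$ in Lemma~\ref{lem:frame} says that $(\langle y,y_1\rangle,\ldots,\langle y,y_N\rangle)\in\text{Ker}\,\mathbf{A}$ for every $y\in\R^{N-d}$, which forces $\sum_j A_{ij}y_j=0$ for each $i$; using $A_{ii}=1$ this rearranges to $s_i=-\sum_{j\neq i}A_{ij}\langle y_i,y_j\rangle$. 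Cauchy--Schwarz then yields $s_i^{2}\le\bigl(\sum_{j\neq i}A_{ij}^{2}\bigr)\,s_i(c-s_i)$, equivalently
\[
\sum_{j\neq i}A_{ij}^{2}\;\ge\;\frac{s_i}{c-s_i}\;=\;f_{c,2}(s_i),
\]
which already handles $p=2$ pointwise in $i$.

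To cover general $p$, I would compare the $\ell^p$- and $\ell^2$-norms of the row $(|A_{ij}|)_{j\neq i}\in\R^{N-1}$. For $1\le p\le 2$ the monotonicity of $\ell^p$-norms on counting measure gives $\sum_{j\neq i}|A_{ij}|^p\ge\bigl(\sum_{j\neq i}A_{ij}^{2}\bigr)^{p/2}\ge f_{c,p}(s_i)$; for $p\ge 2$, Hölder in the opposite direction gives $\sum_{j\neq i}A_{ij}^{2}\le(N-1)^{1-2/p}\bigl(\sum_{j\neq i}|A_{ij}|^p\bigr)^{2/p}$, which rearranges to $\sum_{j\neq i}|A_{ij}|^p\ge(N-1)^{1-p/2}f_{c,p}(s_i)$. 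Summing over $i$, invoking $\sum_i s_i=1$, and applying the definition of $M(c,p,N)$ yields $E_p(\mathbf{A})\ge M(c,p,N)$ in the first regime and $E_p(\mathbf{A})\ge(N-1)^{1-p/2}M(c,p,N)$ in the second.

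No serious obstacle is anticipated: the definitions of $f_{c,p}$ and $M(c,p,N)$ are clearly calibrated so that the two frame identities combined with Cauchy--Schwarz land exactly on $f_{c,p}(s_i)$ at $p=2$. The only point requiring attention is choosing the correct direction of the $\ell^p$-versus-$\ell^2$ comparison in each regime, which is precisely what produces the extra factor $(N-1)^{1-p/2}$ in the $p\ge 2$ bound.
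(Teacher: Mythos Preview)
Your proposal is correct and follows essentially the same route as the paper's proof: both combine the kernel identity $s_i=-\sum_{j\neq i}A_{ij}\langle y_i,y_j\rangle$ with the tight-frame identity $\sum_{j\neq i}\langle y_i,y_j\rangle^2=s_i(c-s_i)$ via H\"older-type inequalities. The only cosmetic difference is that the paper applies H\"older with exponents $(p,q)$ directly and then compares $\ell^q$ to $\ell^2$ on the inner-product vector $(\langle y_i,y_j\rangle)_{j\neq i}$, whereas you first use Cauchy--Schwarz and then compare $\ell^p$ to $\ell^2$ on the row $(A_{ij})_{j\neq i}$; the resulting bounds coincide.
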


\begin{proof}
By Lemma \ref{lem:frame}, there exists a tight frame $\{y_1, y_2,\ldots, y_N\}\subset\mathbb{R}^{N-d}$ such that $\text{Ker\,}\mathbf{A}$ is the set of all vectors $(\langle y, y_1\rangle,\ldots,\langle y, y_N\rangle )$ for some $y\in\mathbb{R}^{N-d}$ and $\sum_{i=1}^N |y_i|^2 = 1$. Taking $y=y_i$ and using the kernel condition for row $i$, we get

$$\langle y_i, y_i \rangle + \sum_{j\neq i} A_{ij} \langle y_i, y_j \rangle = 0.$$

Then for any $1\leq i \leq N$,

$$ \langle y_i, y_i \rangle \leq \sum_{j\neq i} \vert A_{ij}\vert \vert\langle y_i, y_j \rangle\vert \leq \left( \sum_{j\neq i}|A_{ij}|^p\right)^{\frac 1 p} \left( \sum_{j\neq i}\vert\langle y_i, y_j \rangle\vert^q\right)^{\frac 1 q},$$
by H\"{o}lder's inequality for $\frac 1 p + \frac 1 q =1$ (for $q=\infty$, $\left(\sum_{j\neq i}\vert\langle y_i, y_j \rangle\vert^q\right)^{\frac 1 q}=\max_{j\neq i}\vert\langle y_i, y_j \rangle\vert$).

By monotonicity of norms $\|\cdot \|_p$ in $p$ and H\"{o}lder's inequality (for vectors in $\mathbb{R}^{N-1}$), $\Vert x \Vert_q \leq \Vert x \Vert_2$ for $q\geq 2$, while $\Vert x \Vert_q \leq (N-1)^{\frac 1 q - \frac 1 2}\Vert x \Vert_2$ when $q\leq 2$. Hence,

$$\langle y_i, y_i \rangle \leq \left( \sum_{j\neq i}|A_{ij}|^p\right)^{\frac 1 p} \left( \sum_{j\neq i}\langle y_i, y_j \rangle^2\right)^{\frac 1 2}\ \text{ if }p\leq 2,\text{ and},$$

$$\langle y_i, y_i \rangle \leq (N-1)^{\frac 1 2 - \frac 1 p} \left( \sum_{j\neq i}|A_{ij}|^p\right)^{\frac 1 p} \left( \sum_{j\neq i}\langle y_i, y_j \rangle^2\right)^{\frac 1 2}\ \text{ if }p\geq 2.$$

At this point we use the tight frame condition for $y_i$, i.e. $\sum_{j\neq i}\langle y_i, y_j \rangle^2 = \frac 1 {N-d} \langle y_i, y_i \rangle - \langle y_i, y_i \rangle^2$, and denote $\langle y_i, y_i \rangle$ by $t_i$ to arrive finally at the inequalities:

$$ \left( \sum_{j\neq i}|A_{ij}|^p\right)^{\frac 1 p} \geq \left(\frac {t_i} {\frac 1 {N-d} - t_i} \right)^{\frac 1 2}\ \text{ if }p\leq 2,\text{ and},$$

$$ \left( \sum_{j\neq i}|A_{ij}|^p\right)^{\frac 1 p} \geq (N-1)^{\frac 1 p - \frac 1 2} \left(\frac {t_i} {\frac 1 {N-d} - t_i} \right)^{\frac 1 2}\ \text{ if }p\geq 2.$$

Taking powers, noting $\sum_{i=1}^N t_i=1$, and summing these inequalities over all $i$, we obtain the conclusion of the lemma.
\end{proof}

\section{New lower bounds for the $p$-frame energy}\label{sec:new}

As a first application of Lemma \ref{lem:min}, we give a new proof of the result from \cite{okt07} (also Proposition 3.1 in \cite{ehl12}).

\begin{proposition}\label{prop:p>2}
For any $p\geq 2$ and any real $N\times N$ matrix $\mathbf{A}$ of rank $d$, $N\geq 2$, with unit diagonal elements,
$$E_p(\mathbf{A})\geq N(N-1) \left( \frac {N-d} {d(N-1)} \right)^{\frac p 2}.$$
\end{proposition}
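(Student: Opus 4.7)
The plan is to invoke Lemma \ref{lem:min} in the regime $p\geq 2$ and reduce the problem to estimating the auxiliary quantity $M\!\left(\frac{1}{N-d},p,N\right)$ via a convexity argument. Concretely, I will show that for $p\geq 2$ the function $f_{c,p}(t)=\bigl(t/(c-t)\bigr)^{p/2}$ is convex on $[0,c)$, which then lets me apply Jensen's inequality to the constrained minimization defining $M$ and evaluate everything in closed form.

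First I would note that on $[0,c)$ the map $u(t)=t/(c-t)$ is increasing with $u'(t)=c/(c-t)^2$ and $u''(t)=2c/(c-t)^3>0$, so $u$ is convex and nonnegative. For $p\geq 2$ the map $s\mapsto s^{p/2}$ is convex and increasing on $[0,\infty)$, so the composition $f_{c,p}=u^{p/2}$ is convex on $[0,c)$ as well. Therefore, subject to $\sum t_i=1$ and $t_i\in[0,c)$, Jensen's inequality gives
\begin{equation*}
\sum_{i=1}^N f_{c,p}(t_i)\;\geq\; N\,f_{c,p}\!\left(\tfrac{1}{N}\right).
\end{equation*}
Since $M$ is defined over a nonempty feasible set whenever $c>1/N$, which is guaranteed by our hypothesis on $c=1/(N-d)$ (as $N\geq d+1$; the edge case $N=d$ yields a rank-$d$ orthonormal setup where the bound is trivial), this estimate is applicable.

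Next I would substitute $c=1/(N-d)$ and simplify:
\begin{equation*}
f_{c,p}\!\left(\tfrac{1}{N}\right)=\left(\frac{1/N}{\frac{1}{N-d}-\frac{1}{N}}\right)^{p/2}=\left(\frac{N-d}{d}\right)^{p/2},
\end{equation*}
so $M\!\left(\frac{1}{N-d},p,N\right)\geq N\bigl(\tfrac{N-d}{d}\bigr)^{p/2}$. Plugging this into the $p\geq 2$ bound of Lemma \ref{lem:min} yields
\begin{equation*}
E_p(\mathbf{A})\;\geq\;(N-1)^{1-\frac{p}{2}}\cdot N\left(\frac{N-d}{d}\right)^{p/2}=N(N-1)\left(\frac{N-d}{d(N-1)}\right)^{p/2},
\end{equation*}
which is the claimed inequality.

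There is not really a hard step here, as the entire argument is a direct application of Lemma \ref{lem:min} followed by Jensen; the only point requiring any thought is the convexity of $f_{c,p}$ on $[0,c)$ for $p\geq 2$, which however follows transparently from the convexity and monotonicity of $t\mapsto t/(c-t)$ and $s\mapsto s^{p/2}$. One could also remark that equality is attained when all $t_i=1/N$, consistent with an equiangular tight frame (when one exists), which gives a useful sanity check that the bound is of the right shape.
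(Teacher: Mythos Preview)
Your proof is correct and follows essentially the same approach as the paper: dispose of $N=d$ trivially, establish convexity of $f_{c,p}$ on $[0,c)$ for $p\geq 2$, apply Jensen's inequality to evaluate $M\!\left(\tfrac{1}{N-d},p,N\right)$, and combine with Lemma~\ref{lem:min}. (Your parenthetical description of the $N=d$ case as an ``orthonormal setup'' is a slight mischaracterization---a rank-$d$ matrix with unit diagonal need not be the identity---but since the right-hand side vanishes there the conclusion is unaffected.)
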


\begin{proof}
For $N=d$, the right-hand side is 0. We assume $N\geq d+1$ for the rest of the proof.

For $p\geq 2$, $f_{c,p} (t) =\left(\frac {t} {c-t}\right)^{\frac p 2}$ is convex on $[0,c)$. Due to Jensen's inequality, this implies 

$$M\left(\frac 1 {N-d}, p, N\right) = N f_{\frac 1 {N-d},p} \left(\frac 1 N\right) = N \left(\frac {N-d} {d}\right)^{\frac p 2}.$$

Together with Lemma \ref{lem:min} this completes the proof.
\end{proof}

It is straightforward to check that when $p>2$, the only case in which the inequality of Proposition \ref{prop:p>2} is exact, holds when $\mathbf{Y}=\{y_1,\ldots,y_N\}$ is a tight frame in $\mathbb{R}^{N-d}$ which satisfies that $|y_i|$ is constant for all $i$ and $\vert\langle y_i,y_j\rangle\vert$ is also constant for all $i\neq j$. In other words equality holds in Proposition \ref{prop:p>2} if and only if $\mathbf{Y}$ is an \textit{equiangular tight frame} (ETF). In view of $\mathbf{Y}$ as an ETF, matrix $\mathbf{A}$ is then the Gram matrix of the $d$-dimensional ETF known as the \textit{Naimark complement} or \textit{Gale dual} to $\mathbf{Y}$ (see, for instance, \cite{cas13} and \cite{coh16} for more details about Naimark complements and Gale duality of equiangular tight frames).

There are several interesting properties of ETFs but the two most fundamental are that they are precisely the equality achieving systems of vectors for the {\it Welch bound}, and the maximum size $N$ of such systems is limited by {\it Gerzon's bound}.

The Welch bound gives a lower bound for the {\it coherence} of a system of unit vectors namely, for unit vectors $\{\varphi_{i}\}_{i=1}^N\subset \R^{d}$ or $\C^{d}$,
$$\max\limits_{i\neq j} |\langle \varphi_i,\varphi_j \rangle|\geq \sqrt{\frac{N-d}{d(N-1)}}.$$
This bound is one example of several other bounds which limit how spread out the one-dimensional subspaces corresponding to each vector may be \cite{wel74}; see \cite{lev82} for similar bounds and their derivation from a linear programming approach.

Gerzon's bound \cite{lem73} limits the size of an ETF, and states in the real and complex case respectively,

$$N\leq \binom{d+1}{2}\ \text{ and }\ N\leq d^2.$$

We call ETFs attaining this bound {\it maximal}. Maximal ETFs in real spaces $\R^d$ may exist only when $d=1, 2, 3$ or $(2m+1)^2-2$ for some natural $m$. There are known maximal ETFs for $d=1, 2, 3, 7, 23$ only. More details on the existence of maximal ETFs and the improvement of Gerzon's bound for $d\neq (2m+1)^2-2$ are available in \cite{gla18}. The existence of complex maximal ETFs is a famous open conjecture \cite{zau99}. For maximal ETFs, the Welch bound must hold as well so their coherence must satisfy $\alpha^2=\frac 1 {d+2}$ in the real case and $\alpha^2=\frac 1 d$ in the complex case. We discuss maximal ETFs further in connection with Theorem~\ref{thm:cont-energy} in Section~\ref{sec:disc}.

Using the lemmas from the previous section, we now give another observation on the relation between optimizing $E_p(\mathbf{A})$ and the problem $M(c,p,N)$.

\begin{theorem}\label{thm:gen}
For any $1\leq p< 2$ and any real $N\times N$ matrix $\mathbf{A}$ of rank $d$ with unit diagonal elements,
$$E_p(\mathbf{A})\geq \frac {2(N-d)}{p^{\frac p 2} (2-p)^{\frac {2-p} 2}}.$$
\end{theorem}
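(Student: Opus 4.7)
The plan is to invoke Lemma \ref{lem:min} in the range $1\le p<2$, which reduces the task to proving $M(c,p,N)\ge 2(N-d)/(p^{p/2}(2-p)^{(2-p)/2})$ for $c=1/(N-d)$ (the case $N=d$ being trivial, as the right hand side vanishes). Because the constraint $\sum_{i=1}^N t_i=1$ in (\ref{eq:mcpn}) is linear, the natural strategy is to seek the largest constant $\alpha>0$ such that the pointwise inequality $f_{c,p}(t)\ge \alpha t$ holds for every $t\in[0,c)$. Any such $\alpha$ immediately yields $M(c,p,N)\ge \alpha\sum_{i=1}^N t_i=\alpha$, and since $f_{c,p}(0)=0$, this tangent-from-the-origin bound can in fact be sharp on the level of the auxiliary problem.

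Finding the optimal $\alpha$ amounts to minimizing the ratio $h(t):=f_{c,p}(t)/t = t^{p/2-1}(c-t)^{-p/2}$ on $(0,c)$. Since $p<2$ the exponent $p/2-1$ is negative, so $h(t)\to\infty$ as $t\to 0^+$, and clearly also $h(t)\to\infty$ as $t\to c^-$, so the infimum is attained at an interior critical point. The logarithmic derivative $(\log h)'(t)=(p/2-1)/t+(p/2)/(c-t)$ is strictly increasing in $t$ (both summands are), so the critical point is unique; solving $(\log h)'(t^*)=0$ gives the closed form $t^*=c(2-p)/2$, which lies in $(0,c/2]$ since $p\in[1,2)$.

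Evaluating $h$ at $t^*$ is then a quick substitution: $c-t^*=cp/2$, whence $t^*/(c-t^*)=(2-p)/p$ and $f_{c,p}(t^*)=((2-p)/p)^{p/2}$. Dividing by $t^*=c(2-p)/2$ and using the identity $(2-p)^{p/2-1}=(2-p)^{-(2-p)/2}$ gives
\[\alpha=h(t^*)=\frac{2}{c\,p^{p/2}(2-p)^{(2-p)/2}},\]
which with $c=1/(N-d)$ is precisely the claimed constant. I do not anticipate a real obstacle; the only subtlety worth flagging is that on $[0,c)$ the function $f_{c,p}$ is neither convex nor concave for $1\le p<2$ (it transitions from concave near $0$ to convex near $c$), so Jensen's inequality is unavailable and it is the tangent-through-the-origin bound that extracts the sharp constant from the linear constraint.
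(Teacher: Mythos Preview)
Your proposal is correct and follows essentially the same route as the paper: invoke Lemma~\ref{lem:min}, then bound $M(\tfrac{1}{N-d},p,N)$ below via the tangent-through-the-origin inequality $f_{c,p}(t)\ge h(t^*)\,t$, locating the minimizer $t^*=c(2-p)/2$ of $f_{c,p}(t)/t$ and reading off the constant. The paper's argument is terser (it simply asserts where the minimum of $f_{c,p}(t)/t$ occurs), while you supply the supporting calculus and the remark on the failure of Jensen, but the ideas are identical.
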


\begin{proof}
For $N=d$, the right-hand side is 0. We assume $N\geq d+1$ for the rest of the proof.

$f_{\frac 1 {N-d}, p} (t) / t$ is minimized on $(0,\frac 1 {N-d})$ at $t_0=\frac {2-p} {2(N-d)}$. Hence, $$f_{\frac 1 {N-d}, p} (t)\geq \frac {2(N-d)} {p^{\frac p 2} (2-p)^{\frac {2-p} 2}} t,\ \text{ and so,}$$

$$M\left(\frac 1 {N-d}, p, N\right)\geq \frac {2(N-d)} {p^{\frac p 2} (2-p)^{\frac {2-p} 2}} \sum_{i=1}^N t_i = \frac {2(N-d)} {p^{\frac p 2} (2-p)^{\frac {2-p} 2}}.$$ The final bound then follows from Lemma \ref{lem:min}.
\end{proof}

When taking $p=1$ in Theorem \ref{thm:gen}, we get $E_1(\mathbf{A})\geq 2(N-d)$. For $N$ in the range $d+1\leq N \leq 2d$, we thus obtain the bound conjectured in \cite{mat17} from Theorem \ref{thm:gen}. We formulate it here as a simple statement about angles between lines in Euclidean spaces where it is understood that such angles are restricted to lie in $[0,\pi/2]$. 

\begin{corollary}\label{cor:1-frame}
The sum of cosines of all pairwise angles between $N$ lines in $\mathbb{R}^d$ is at least $N-d$. For $N\in[d,2d]$, the bound is sharp and the unique minimizer is the set of $N$ lines forming a repeated orthonormal basis.
\end{corollary}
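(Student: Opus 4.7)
The plan is to obtain the bound directly from Theorem~\ref{thm:gen} with $p=1$ and to derive uniqueness by tracing the chain of equalities backwards. Fix unit representatives $x_1,\dots,x_N\in\R^d$ of the $N$ lines; since $\cos\theta_{ij}=|\langle x_i,x_j\rangle|$, the sum of cosines equals $\tfrac12 E_1(\mathbf{A})$ where $\mathbf{A}$ is their Gram matrix. As $\mathrm{rank}(\mathbf{A})\le d$, Theorem~\ref{thm:gen} at $p=1$ gives $E_1(\mathbf{A})\ge 2(N-d)$, proving the inequality. Sharpness for $m:=N-d\in[0,d]$ is exhibited by doubling the first $m$ vectors of an orthonormal basis of $\R^d$: the $m$ coincident pairs each contribute $\cos 0=1$, and all other pairs contribute $0$.

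For uniqueness I retrace the proofs of Theorem~\ref{thm:gen} and Lemma~\ref{lem:min} under the standing assumption $\sum_{i<j}\cos\theta_{ij}=N-d$. First, $\mathrm{rank}(\mathbf{A})=d$, else the theorem is strict. Let $\{y_i\}_{i=1}^N\subset\R^m$ be the tight frame produced by Lemma~\ref{lem:frame}. Equality in Theorem~\ref{thm:gen} makes $f_{1/m,1}(t_i)=2m\,t_i$ at $t_i:=|y_i|^2$; since $f_{c,1}(t)/t=1/\sqrt{t(c-t)}$ attains its minimum only at $t=c/2$, each $t_i\in\{0,\tfrac1{2m}\}$, and the constraint $\sum_i t_i=1$ forces a support set $S\subseteq\{1,\dots,N\}$ of size exactly $2m$. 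For each $i\in S$, equality must hold at every step of Lemma~\ref{lem:min} with $p=1$, $q=\infty$; the decisive one is $\max_{j\ne i}|\langle y_i,y_j\rangle|\le\bigl(\sum_{j\ne i}\langle y_i,y_j\rangle^2\bigr)^{1/2}$, which is tight only when at most one summand on the right is nonzero. Since the right-hand side equals $1/(2m)>0$ by the tight-frame identity at $t_i=1/(2m)$, there is exactly one $\pi(i)\ne i$ with $|\langle y_i,y_{\pi(i)}\rangle|=1/(2m)=|y_i||y_{\pi(i)}|$. Cauchy--Schwarz then forces $y_{\pi(i)}=\pm y_i$, and by symmetry $\pi$ is an involution on $S$.

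To finish, transfer the pairing back to the $x_i$. The kernel of $\mathbf{A}$ is spanned by $N$-tuples $(\langle y,y_i\rangle)_i$ for $y\in\R^m$, which, in view of the pairing, admit a basis supported on $S$ of the form $e_i\pm e_{\pi(i)}$. Writing $\mathbf{A}=X^\top X$ and enforcing $Xv=0$ on each such kernel vector yields $x_i=\pm x_{\pi(i)}$, so the $2m$ paired vectors collapse to $m$ distinct lines. These $m$ pairs alone already contribute $2m$ to $E_1(\mathbf{A})$, so the hypothesis $E_1(\mathbf{A})=2m$ forces every remaining $|A_{ij}|$ to vanish. Hence the $m$ paired lines and the $d-m$ singleton lines (those indices with $y_i=0$) are mutually orthogonal, producing a repeated orthonormal basis. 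I expect the main obstacle to be the $\max$-versus-$\ell_2$ equality step, which is what converts the analytic equality into a rigid combinatorial pairing of the $y_i$'s; once that pairing is in place, everything else is bookkeeping via the Gale-dual description of the kernel in Lemma~\ref{lem:frame}.
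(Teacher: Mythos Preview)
Your derivation of the inequality is exactly the paper's primary argument: specialize Theorem~\ref{thm:gen} to $p=1$ and read off $E_1(\mathbf{A})\ge 2(N-d)$. Where you go further is the uniqueness clause: the paper merely asserts it, while you trace equality back through Lemma~\ref{lem:min} and Theorem~\ref{thm:gen} to pin down the dual tight frame $\{y_i\}$ and hence the Gram matrix. That argument is sound; one small step you glossed over is why $\pi(i)\in S$---you need Cauchy--Schwarz first to get $|y_{\pi(i)}|\ge 1/\sqrt{2m}$, and only then does $t_{\pi(i)}\in\{0,1/(2m)\}$ force $|y_{\pi(i)}|=|y_i|$ and hence collinearity. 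The paper also offers an \emph{alternative} proof of the bound by induction on $N$ using the Gershgorin circle theorem: any $(d{+}1)\times(d{+}1)$ principal minor is singular, so some row has off-diagonal $\ell_1$-mass at least $1$; delete that row/column and recurse. That route is more elementary (no tight-frame duality), but as the paper notes it does not extend to non-symmetric rank-$d$ matrices, whereas your approach through Lemma~\ref{lem:min} does, and it delivers the equality analysis essentially for free.
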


As hinted in the discussion \cite{mat17}, Corollary \ref{cor:1-frame} may be proven by induction. For the sake of completeness, we provide such a proof here as well.

\begin{proof}[Alternative proof of Corollary \ref{cor:1-frame}]
We choose a unit vector in the direction of each of the $N$ lines and construct an $N\times N$ Gram matrix for the chosen vectors. The matrix has rank no greater than $d$ so, by the Gershgorin circle theorem, any $(d+1)\times(d+1)$ diagonal minor of the matrix will have at least one row whose sum of absolute values of non-diagonal entries is at least 1. The inductive step consists then in finding one row like this and using the inductive hypothesis for the $(N-1)\times (N-1)$ diagonal minor obtained by deleting this row and the column symmetric to it from the matrix. \end{proof}

We do not know how to extend this short proof to a more general problem of finding lower bounds for the 1-frame energy of matrices that is covered by Theorem \ref{thm:gen}. The proof of Corollary \ref{cor:1-frame} does not seem to work for non-symmetric matrices. We also note that Theorem \ref{thm:gen} implies the same bound $2(N-d)$ for $E_p(\mathbf{A})$ when $p\in(0,1)$.

For the case of $N=d+1$, Chen, Gonzales, Goodman, Kang, and Okoudjou \cite{che19} posed a conjecture for the minimum of the $p$-frame energy for all $p\in(0,2)$. They conjectured that a global minimum is necessarily formed by $k+1$ unit vectors whose endpoints form a regular $k$-dimensional simplex and $N-k-1$ vectors that are pairwise orthogonal and orthogonal to the subspace of the regular simplex. In particular, their conjecture states that for $p<\frac {\ln 3} {\ln 2}\approx 1.58496$, the minimum is 2 and attained only on the repeated orthogonal basis with $d+1$ vectors.

We study a more general problem of $d+m$ vectors, $1\leq m < d$, and, using Lemma \ref{lem:min}, prove that the repeated orthonormal basis minimizes $E_p$ for $p
\in [1,p_m]$. In particular, for $m=1$, our results confirm the conjecture from \cite{che19} for $p$ in the range $p\leq 2(\frac {\ln 3} {\ln 2}-1)\approx 1.16993$.

\begin{theorem}\label{thm:d+1}
For $p\in[1,2\log{\frac{2m+1}{2m}}/\log{\frac{m+1}{m}}]$, $1\leq m< d$ and a real $(d+m)\times (d+m)$ matrix $\mathbf{A}$ of rank $d$ with ones along the diagonal, 
$$E_{p}(\mathbf{A})=\sum\limits_{i\neq j}|A_{i,j}|^{p} \geq 2m.$$
\end{theorem}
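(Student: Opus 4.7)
The plan is to combine Lemma~\ref{lem:min} with a careful analysis of the auxiliary minimization $M(1/m, p, d+m)$. Since $p\in[1,2]$, Lemma~\ref{lem:min} gives $E_p(\mathbf{A})\ge M(1/m,p,d+m)$, so it suffices to show $M(1/m,p,d+m)\ge 2m$. Substituting $s_i=m t_i$ and writing $g(s):=(s/(1-s))^{p/2}$, this becomes
\[
\sum_{i=1}^{d+m} g(s_i)\ \ge\ 2m \quad\text{whenever}\quad s_i\in[0,1),\ \sum_i s_i=m.
\]
The bound is sharp at the configuration with $2m$ coordinates equal to $1/2$ and $d-m$ zeros, which corresponds via Lemma~\ref{lem:frame} to the Gram matrix of the doubled basis $\{e_1,\ldots,e_d,e_1,\ldots,e_m\}$.

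\textbf{Structure of minimizers.} A direct computation gives $g'(s) = \tfrac{p}{2}\, s^{p/2-1}(1-s)^{-p/2-1}$, which is unimodal on $(0,1)$ with unique minimum at $s_0=(2-p)/4$; hence $g$ is concave on $[0,s_0]$ and convex on $[s_0,1)$. By the KKT conditions, the nonzero coordinates of any minimizer take at most two distinct positive values, and the second-order perturbation $(s_i,s_j)\to(s_i-\epsilon,s_j+\epsilon)$ rules out two coordinates sharing a common value in the concave interval $(0,s_0)$. Thus only two cases need be analyzed: (A) all nonzero coordinates share a common value; (B) one coordinate equals some $a\in(0,s_0)$ and $k_2\ge 1$ coordinates equal a common $b\in(s_0,1)$ with $g'(a)=g'(b)$.

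\textbf{Case (A).} The energy equals $h(k):=k(m/(k-m))^{p/2}$ for an integer $k\in\{m+1,\ldots,d+m\}$. Logarithmic differentiation gives $d\log h/dk=1/k-(p/2)/(k-m)$, vanishing at $k^*=2m/(2-p)\ge 2m$, so $h$ is unimodal. One has $h(2m)=2m$ identically in $p$, while the hypothesis $p\le p_m$ is exactly equivalent (after exponentiating the defining relation of $p_m$) to $h(2m+1)\ge 2m$. Unimodality then yields $h(k)\ge 2m$ for every admissible integer $k$.

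\textbf{Case (B) is the main obstacle.} Here my plan is to establish the comparison
\[
g(a)+k_2 g(b)\ \ge\ k_2\, g\!\left(b+\frac{a}{k_2}\right)\ =\ h(k_2)\ \ge\ 2m,
\]
reducing the case to (A). Taylor-expanding the middle term at $b$ and using the critical-point identity $g'(a)=g'(b)$, the first inequality reduces to a bound of the form $g(a)-a g'(a)\ge \tfrac{a^2}{2k_2}\,g''(\xi)$ for some $\xi\in(b,b+a/k_2)$. The left side is strictly positive because $s\mapsto g(s)/s$ is minimized on $(0,1)$ at $s^*=(2-p)/2$ and $a<s_0<s^*$; the right side is a second-order correction in the convex region $[s_0,1)$. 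Controlling the ratio of the two sides uniformly along the critical-point locus $\{g'(a)=g'(b)\}$ is the most delicate step, and where I expect the bulk of the technical work.
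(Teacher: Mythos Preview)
Your reduction via Lemma~\ref{lem:min} and your treatment of Case~(A) match the paper's argument essentially line for line, including the key observation that $p_m$ is chosen precisely so that $h(2m)=h(2m+1)=2m$, after which unimodality of $h$ on $(m,\infty)$ disposes of all integer $k$.

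The gap is Case~(B), which you explicitly leave as a plan. The Taylor route you sketch is not obviously wrong, but controlling the remainder $\tfrac{a^2}{2k_2}g''(\xi)$ uniformly along the KKT locus $g'(a)=g'(b)$ is genuinely delicate: as $b\to 1$ one has $g''(\xi)\to\infty$, and you would need to extract a compensating decay from $a$ and from $g(a)-ag'(a)$. You also implicitly need $k_2\ge m+1$ for the final comparison $h(k_2)\ge 2m$ to be available (when $k_2=m$ the quantity $h(k_2)$ is infinite and your displayed chain fails), so that case would have to be excluded separately. None of this is carried out.

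The paper bypasses the estimate altogether. Rather than restricting to KKT points, it parametrizes \emph{all} type-(B) configurations by the common large value $x$ (with the lone small coordinate equal to $1-kx$ in the $t$-variables) and studies
\[
g_k(x)=k\left(\frac{mx}{1-mx}\right)^{p_m/2}+\left(\frac{m(1-kx)}{1-m(1-kx)}\right)^{p_m/2}
\]
on $I=(1/(k+1),1/k)$. The equation $g_k'(x)=0$ is rewritten as an equality between a function that is convex on $I$ and one that is concave on $I$; since the two agree at $x=1/(k+1)$ with the correct derivative ordering, they cross at most once more in $I$. Combined with $g_k'(1/(k+1))=0$ and $g_k'(x)\to-\infty$ as $x\to 1/k$, this forces the unique interior critical point to be a local maximum, so $g_k$ attains its minimum on $\overline I$ at the endpoints. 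Those endpoint values are exactly $h(k)$ and $h(k+1)$, already bounded below by $2m$ in Case~(A). This convexity--concavity trick replaces your remainder estimate entirely and closes the proof without any asymptotic bookkeeping.
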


The following lemma is used in the proof of Theorem \ref{thm:d+1}.

\begin{lemma} Set $\alpha=\frac 1 m \left(\frac{1}{2}-\frac{p}{4}\right)$. For $p\in[1,2]$, $M(\frac{1}{m},p,N)$ is minimized for $t_{j}$ of the form
\begin{enumerate}
\item[(i)] $t_1=\dots=t_{k}=\frac{1}{k},\ t_{k+1}=\dots=t_{n}=0$, where $\frac{1}{k}\geq \alpha$, or,  

\item[(ii)] $t_1=\dots=t_k=x$, $t_{k+1}=1-kx$, $t_{k+2}=\dots=t_{N}=0$, where $\alpha\leq x < \frac 1 m$, $0<1-kx<\alpha$.
\end{enumerate}
\end{lemma}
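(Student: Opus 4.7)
The plan is to read off the structure of a minimizer directly from the convexity profile of $f_{c,p}$ combined with Lagrange multipliers. First I would compute
$$f_{c,p}''(t) \;=\; \frac{pc}{2}\, t^{p/2-2}(c-t)^{-p/2-2}\bigl[(p/2-1)c + 2t\bigr],$$
which vanishes exactly at $t^{\star} = c(2-p)/4$. Setting $c = 1/m$ gives $t^{\star} = \alpha$, so $f_{c,p}$ is strictly concave on $(0,\alpha)$ and strictly convex on $(\alpha, c)$. Consequently $f_{c,p}'$ is strictly decreasing on $(0,\alpha]$ (from $+\infty$ down to $f_{c,p}'(\alpha)$) and strictly increasing on $[\alpha, c)$ (back to $+\infty$); in short, $f_{c,p}'$ is U-shaped with a unique minimum at $\alpha$.

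Next I would apply the KKT conditions to (\ref{eq:mcpn}). For $p\in[1,2)$ one has $f_{c,p}'(0^+) = +\infty$, so the stationarity condition reduces to the existence of a multiplier $\lambda$ with $f_{c,p}'(t_i) = \lambda$ for every index $i$ with $t_i > 0$. By the U-shape of $f_{c,p}'$ established above, this single equation admits at most two solutions in $(0,c)$; call them $t^- \in (0,\alpha]$ and $t^+ \in [\alpha, c)$. Therefore, at any minimizer, each nonzero coordinate equals either $t^-$ or $t^+$.

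The key step is to show that at most one coordinate can take the value $t^-$ whenever $t^- < \alpha$. Suppose to the contrary that $t_i = t_j = t^-$ with $t^- < \alpha$. The perturbation $(t_i, t_j) \mapsto (t^- - \varepsilon,\, t^- + \varepsilon)$ is feasible for small $\varepsilon > 0$ and preserves $\sum_\ell t_\ell = 1$, yet shifts the objective by $f_{c,p}''(t^-)\,\varepsilon^2 + O(\varepsilon^3) < 0$ since $f_{c,p}''(t^-) < 0$ in the strict concavity region, contradicting optimality. Partitioning the remaining possibilities yields exactly the two configurations of the lemma: either every nonzero coordinate equals $t^+$ (case (i), with $1/k = t^+ \geq \alpha$), or exactly one coordinate equals $t^-$ while the remaining $k$ nonzero coordinates equal $x := t^+ \geq \alpha$, forcing the small coordinate to be $1 - kx = t^- \in (0,\alpha)$ (case (ii)).

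The main obstacle I anticipate is the degenerate case $t^- = \alpha$, where $f_{c,p}''(\alpha) = 0$ and the second-order bump argument fails. Fortunately the U-shape of $f_{c,p}'$ means $\lambda = f_{c,p}'(\alpha)$ has $\alpha$ as its unique preimage, so $t^- = t^+ = \alpha$ and all nonzero coordinates share the common value $\alpha = 1/k$, placing the configuration into case (i) automatically. Modulo this boundary check and routine verification of the sign of $f_{c,p}''$ across $\alpha$, no further analytic difficulty is expected.
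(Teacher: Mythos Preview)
Your argument is correct, but it takes a different route from the paper's. The paper observes the same concave/convex split at $\alpha$ and then argues purely by majorization: on the concave region $[0,\alpha]$ it invokes the Karamata inequality to push all of the $t_i$ lying there to the endpoints $0$ or $\alpha$ (with at most one left over in the interior, for the sum constraint), and on the convex region $[\alpha,c)$ it applies Jensen to replace all those $t_i$ by their common average. That immediately yields the two configurations (i) and (ii).

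Your approach instead reads off the structure from first- and second-order optimality: KKT forces $f_{c,p}'(t_i)=\lambda$ on the positive coordinates, the U-shape of $f_{c,p}'$ limits the positive values to at most two, and a local concavity bump rules out repeating the small one. Both arguments are short. The paper's is a bit more elementary in that it avoids Lagrange multipliers and differentiability issues at $t_i=0$ altogether, and it works verbatim even for $p=2$ without a separate boundary check. Your approach, on the other hand, makes the two-value structure of any critical point completely transparent via the shape of $f_{c,p}'$, and would extend with no change to any objective whose derivative is U-shaped on the feasible interval.
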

\begin{proof}
Computing the second derivative of $f_{1/m,p}(t)$, we see it is concave on $[0,\alpha]$ and convex on $[\alpha,\frac 1 m)$ where $\alpha=\frac 1 m \left(\frac{1}{2}-\frac{p}{4}\right)$. All $t_i$ lying in $[0,\alpha]$ may be moved to the endpoints of the interval, except for at most one number, while keeping their sum constant and minimizing the sum of values of $f_{1/m,p}$ (this follows, for instance, from the Karamata inequality, see \cite[pg 89]{har88}). After this we may apply Jensen's inequality for all numbers from $[\alpha,\frac 1 m)$ and assume they are all equal. The resulting minimizer is then necessarily one of the two types: 1) $t_1=\ldots=t_k=\frac 1 k$, $t_{k+1}=\ldots=t_N=0$, where $\frac 1 k \geq \alpha$, 2) $t_1=\ldots=t_k=x$, $t_{k+1}=1-kx$, $t_{k+2}=\ldots=t_N=0$, where $x\geq\alpha$ and $0<1-kx<\alpha$. \end{proof}

We now follow with a proof of Theorem \ref{thm:d+1}.

\begin{proof}
\indent Set $p_m=2\log{\frac{2m+1}{2m}}/\log{\frac{m+1}{m}}$ and $q_{m}=\frac{p_m}{2}$. Clearly, it is sufficient to prove the lower bound for $p=p_m$ only. We use Lemma \ref{lem:min} and show that $M(\frac{1}{m},p,N)\geq 2m$. Consider the first case in the above lemma, so that $$t_1=\dots=t_{k}=\frac{1}{k},\ t_{k+1}=\dots=t_{n}=0,$$ where $\frac{1}{k}\geq \alpha=\frac 1 m \left(\frac{1}{2}-\frac{p_m}{4}\right)$. In this case, we need to minimize the value $$kf_{1/m,p_m}\left(\frac{1}{k}\right)=k\left( \frac {m} {k-m} \right)^{\frac{p_m}{2}}.$$
 The real function \begin{equation}\label{eq:fm} F_m(x)=x\left( \frac {m} {x-m} \right)^{\frac{p_m}{2}} \end{equation} for $x>m$ has exactly one local minimum. The degree $p_m$ was specifically chosen so that $F_m(2m)=F_m(2m+1)=2m$. Then by Rolle's theorem, the local minimum of $F_m(x)$ lies in $[2m,2m+1]$. The minimum of $F_m(x)$ for natural values of $x$, $x>m$, is, therefore, attained on $2m$ and $2m+1$ and is equal to $2m$.

\indent In the second case, $x<\frac{1}{k}$ and $x\geq \alpha=\frac 1 m \left(\frac{1}{2}-\frac{p_m}{4}\right)$. It is straightforward to show that $p_m<\frac {4m+2} {4m+1}$ for all natural $m$. Subsequently $k<\frac 1 {\alpha} < 4m+1$ so that $k$ can take (integer) values only in $[m,4m]$. To show $E_{p_m}\geq 2m$ it suffices then to show for all $m\leq j\leq 4m$, and all $x$ in $I=(\frac{1}{j+1},\frac{1}{j})$ that the function 
\begin{align*}    g_{j}(x) =  j \left( \frac{mx}{1-mx} \right)^{q_{m}}+\left(\frac{m(1-jx)}{1-m(1-jx)}\right)^{q_{m}},  \end{align*} 
satisfies $g_{j}(x)\geq 2m$. This will be demonstrated using properties specific to $g_{j}(x)$, namely that the function has at most one critical point, $g_{j}^{\prime}(x)=0$, inside the interval $I$. Taking derivatives,
 \begin{align*} g^{\prime}_{j}(x)&=q_{m} jm \left[ \left( \frac{mx}{1-mx}  \right)^{q_{m}-1} \frac{1}{(1-mx)^2}- \left(\frac{m(1-jx)}{1-m(1-jx)}\right)^{q_{m}-1} \frac{1}{(1+m(-1+jx))^2} \right], \end{align*}

 so that $g'_{j}(x)=0$  gives
 \begin{align*}  \left( \frac{x(1+m(-1+jx))}{(1-mx)(1-jx)} \right)^{q_{m}-1} &= \frac{(1-mx)^2}{(1+m(-1+jx))^2} \\ 
\left( \frac{x(1+m(-1+jx))}{(1-mx)(1-jx)} \right)^{q_{m}+1} &=  \frac{x^2}{(1-jx)^2}  \\
  \frac{1+m(-1+jx)}{1-mx}  &= \left( \frac{x}{(1-jx)} \right)^{\frac{2}{q_{m}+1}-1} \\
 \frac{1-mx}{1+m(-1+jx)}  &= \left( \frac{x}{(1-jx)} \right)^{1-\frac{2}{q_{m}+1}}.
\end{align*} %
\vspace{1 mm}
Calling the function on the left in the above expression $f(x)$ and the function on the right $g(x)$,
\begin{align*}
f^{\prime \prime}(x) = \frac{2j(1+j-m)m^2}{(1+m(-1+jx))^3}>0\ \text{ on }\ I, \end{align*} while letting $\gamma=1-\frac{2}{q_{m}+1}$, \begin{align*} g^{\prime \prime}(x)  = \frac{\gamma (\frac{x}{1-jx})^\gamma (\gamma-1+2jx)}{x^2(jx-1)^2}<0\ \text{ on }\ I, \end{align*}   since $\gamma<0$. Thus $f(x)$ is convex on $I$, while $g(x)$ is concave on $I$. Since $f(\frac{1}{j+1})=g(\frac{1}{j+1})=1$, it must be the case then that $f(x)=g(x)$ for no more than one point $x\in I$, ($x\neq \frac{1}{j+1},\frac{1}{j}$). Now, \begin{align*} g_{j}'\left(\frac{1}{j+1}\right)=0\ \text{ and }\ \lim\limits_{x\rightarrow \frac{1}{j}} g_{j}'\left(x\right)=-\infty. \end{align*} \indent Thus if there is a critical point, it corresponds to a local maximum of $g_{j}(x)$ and it suffices to check the value of $g_{j}(x)$ at the endpoints in $I$ for each $m\leq j\leq 4m$ to establish the desired lower bound. These values are
\begin{align*} g_{j}\left(\frac{1}{j+1}\right)=(1+j)\left(\frac{m}{1+j-m}\right)^{q_m}=F_m(j+1),\ \text{ and }\  g_{j}\left(\frac{1}{j}\right)=j\left(\frac{m}{j-m}\right)^{q_m}=F_m(j), \end{align*}
where $F_m$ is the function defined in equation (\ref{eq:fm}). The minimal value of $F_m(x)$ on natural numbers, $x>m$, as we established earlier, is precisely $2m$. \end{proof}

Following the proofs of Theorem \ref{thm:d+1} and Lemma \ref{lem:min} it is easy to check that the only minimizer is the repeated orthonormal basis. Theorem \ref{thm:d+1} was first announced by the second author in \cite{par19}.

\section{Minimizing energy in two dimensions}\label{sec:2dim}

In this section, we study the problem of minimizing the $p$-frame energy for collections of unit vectors in the plane. In particular, we show that the repeated orthonormal basis is the minimizer for $p\leq 1.3$. As one of the instruments for our proof, we use the solution of the Fejes T\'{o}th problem mentioned in Section \ref{sec:intro}.

\begin{theorem}\label{thm:fej-2}
Let $x_1, x_2, \ldots, x_N$ be unit vectors in the plane. Then, $$\sum\limits_{i,j=1}^N \arccos|\langle x_i,x_j \rangle|\leq \frac{\pi N^2 }{4}, \text{ if } N \text{ is even},$$ $$\sum\limits_{i,j=1}^N \arccos|\langle x_i,x_j \rangle|\leq  \frac{\pi (N^2-1)}{4}, \text{ if } N \text{ is odd}.$$
\end{theorem}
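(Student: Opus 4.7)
The plan is to identify Theorem~\ref{thm:fej-2} with a classical extremal problem on a circle and then apply a concavity/symmetrization argument. Each unit vector $x_i$ in the plane can be represented by its direction angle $\theta_i\in[0,\pi)$, and with this identification
$$\arccos|\langle x_i,x_j\rangle|=\min(|\theta_i-\theta_j|,\ \pi-|\theta_i-\theta_j|)$$
is exactly the shortest-arc distance $d(\theta_i,\theta_j)$ between $\theta_i$ and $\theta_j$ viewed as points on the circle $\R/\pi\mathbb{Z}$ of circumference $\pi$. Thus the theorem is equivalent to the statement that for any $N$ points on this circle the double sum $S:=\sum_{i,j=1}^N d(\theta_i,\theta_j)$ is bounded above by $\pi N^2/4$ for even $N$ and $\pi(N^2-1)/4$ for odd $N$, with equality for the equispaced configuration.

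I would relabel so that $\theta_1\leq\theta_2\leq\dots\leq\theta_N$ in the induced cyclic order and reparametrize via the consecutive gaps $g_i=\theta_{i+1}-\theta_i$ for $i<N$ together with $g_N=\pi-\theta_N+\theta_1$, which lie in the simplex $\Delta=\{g\in\R^N:g_i\geq 0,\ \sum_i g_i=\pi\}$. For $i<j$ the short-arc distance can be written as
$$d(\theta_i,\theta_j)=\min\Bigl(\sum_{k=i}^{j-1}g_k,\ \pi-\sum_{k=i}^{j-1}g_k\Bigr),$$
the minimum of two linear functions of $g$, and hence concave on $\Delta$; summing, $S$ is concave in $g$. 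Moreover $S$ is invariant under the cyclic shift $\sigma(g_1,\dots,g_N)=(g_2,\dots,g_N,g_1)$, because such a shift corresponds to a rotation of the configuration around the circle and preserves the multiset of pairwise distances.

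Averaging $g$ over its $N$ cyclic shifts yields the equispaced vector $\bar g=(\pi/N,\dots,\pi/N)$, and concavity combined with this cyclic symmetry gives
$$S(\bar g)\geq\frac{1}{N}\sum_{k=0}^{N-1}S(\sigma^k g)=S(g),$$
so the maximum of $S$ on $\Delta$ is attained at the equispaced configuration $\theta_i=(i-1)\pi/N$. A direct calculation, noting that in the equispaced case the distance from $\theta_1$ to $\theta_k$ equals $\min(k-1,N-k+1)\pi/N$ and summing the resulting arithmetic progression with the appropriate middle term, then yields $S=\pi N^2/4$ when $N$ is even and $S=\pi(N^2-1)/4$ when $N$ is odd. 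I do not foresee a serious obstacle; the one substantive observation is that the short-arc distance is a concave function of the gap vector, which reduces the geometric extremal problem to a routine application of Jensen's inequality together with the cyclic symmetry of $S$.
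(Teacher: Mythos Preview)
The paper does not actually prove Theorem~\ref{thm:fej-2}; it is quoted from the literature (Fodor--V\'igh--Zarn\'ocz \cite{fod16}, with alternative proofs in Bilyk--Matzke \cite{bil19}) and used as a black box in the proof of Theorem~\ref{thm:dim-2}. So there is no ``paper's own proof'' to compare against.

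Your argument is correct. The identification of $\arccos|\langle x_i,x_j\rangle|$ with the short-arc distance on $\R/\pi\mathbb{Z}$ is right; each pairwise distance is indeed the minimum of two affine functions of the gap vector $g$, hence concave on the simplex $\Delta$; $S$ is invariant under cyclic shifts of $g$ because such a shift merely relabels the points; and Jensen's inequality applied to the average over the cyclic orbit forces the maximum to occur at the equispaced point $\bar g=(\pi/N,\dots,\pi/N)$. The final arithmetic for even and odd $N$ checks out. This concavity-plus-symmetrization approach is essentially one of the short proofs given in \cite{bil19}, so you have independently reproduced a known argument; the original proof in \cite{fod16} proceeds differently, via a more combinatorial analysis of the arrangement of lines. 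Your route is cleaner and entirely self-contained, which is a genuine advantage if one wishes to avoid citing the result.
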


Theorem \ref{thm:fej-2} was proven in \cite{fod16}. Several alternative proofs were also obtained in \cite{bil19}.

\begin{theorem}\label{thm:dim-2}
Let $\mathbf{A}$ be a Gram matrix of $N$ unit vectors in the plane. Then for $p\in (0,1.3]$ , $E_p(A)\geq N(N-2)/2$ if $N$ is even and $E_p(A)\geq (N-1)^2/2$ if $N$ is odd.
\end{theorem}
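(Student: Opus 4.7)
The plan is to parametrize the unit vectors by angles, reduce to the endpoint $p=1.3$ via monotonicity, and combine Theorem~\ref{thm:fej-2} with a pointwise inequality that bounds $|\langle x_i,x_j\rangle|^p$ from below.

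Since $|A_{ij}|\in[0,1]$, the map $p\mapsto E_p(\mathbf{A})$ is non-increasing for any fixed Gram matrix, so it suffices to prove the bound at $p=1.3$. Set $\theta_{ij}=\arccos|\langle x_i,x_j\rangle|\in[0,\pi/2]$, so that $E_p(\mathbf{A})=\sum_{i\neq j}\cos^p\theta_{ij}$, and Theorem~\ref{thm:fej-2} gives $\sum_{i\neq j}\theta_{ij}\leq \pi N^2/4$ when $N$ is even and $\pi(N^2-1)/4$ when $N$ is odd.

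The natural candidate is the chord inequality $\cos^p\theta\geq 1-(2/\pi)\theta$ on $[0,\pi/2]$, which matches $\cos^p\theta$ at the two endpoints. For $p\leq 1$ this follows from the concavity of $\cos^p\theta$ on $[0,\pi/2]$ (one checks that $(\cos^p)''\leq 0$ there, using $(\cos^p)''=p\cos^{p-2}\theta[(p-1)-p\cos^2\theta]$). Summing and applying the Fejes T\'oth bound yields
\[
E_p(\mathbf{A})\geq N(N-1)-\tfrac{2}{\pi}\sum_{i\neq j}\theta_{ij}\geq N(N-1)-N^2/2=N(N-2)/2,
\]
and similarly $(N-1)^2/2$ for odd $N$. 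This settles the case $p\in(0,1]$ and also disposes of the pointwise structural steps.

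The main obstacle is extending the argument to $p\in(1,1.3]$. For such $p$ the function $\cos^p\theta$ becomes convex on $[\theta_0,\pi/2]$ with $\cos^2\theta_0=(p-1)/p$, so the chord inequality fails on a sub-interval $(\theta^\ast(p),\pi/2)$ adjacent to $\pi/2$. To overcome this I would split the pairs according to $S_1=\{(i,j):\theta_{ij}\leq\theta^\ast\}$ and $S_2=\{(i,j):\theta_{ij}>\theta^\ast\}$: on $S_1$ the linear bound still applies, while on $S_2$ convexity gives $\cos^p\theta\geq$ tangent line to $\cos^p$ at $\theta^\ast$. Combining the two affine bounds yields a lower estimate for $E_p(\mathbf{A})$ that is linear in $\sum_{S_1}\theta_{ij}$, $\sum_{S_2}\theta_{ij}$, $|S_1|$, and $|S_2|$, and one minimizes it subject to $|S_1|+|S_2|=N(N-1)$, the global Fejes T\'oth constraint $\sum_{i\neq j}\theta_{ij}\leq\pi N^2/4$, and the bounds $\theta_{ij}\leq\theta^\ast$ on $S_1$, $\theta_{ij}>\theta^\ast$ on $S_2$. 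The quantitative heart of the proof is verifying that this optimum is at least $N(N-2)/2$ (resp.\ $(N-1)^2/2$) precisely for $p\leq 1.3$; the threshold $1.3$ should be where the tangent-plus-Fejes T\'oth coupling just barely suffices. A possible alternative is to replace the linear comparison by a nonlinear $\phi(\theta)$ valid pointwise on all of $[0,\pi/2]$ (for instance a convex combination of $1-(2/\pi)\theta$ and $(1-(2/\pi)\theta)^2$, the latter being $\leq\cos^p\theta$ for a much wider range of $p$), but one then has to lower bound $\sum\phi(\theta_{ij})$ more carefully than Cauchy--Schwarz allows, using the box constraints $1-(2/\pi)\theta_{ij}\in[0,1]$. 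Either route, the delicate calibration of the auxiliary function against Fejes T\'oth is the step I expect to be hardest.
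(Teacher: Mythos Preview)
Your reduction to $p=1.3$ and the treatment of $p\le 1$ via concavity of $\cos^p$ plus Theorem~\ref{thm:fej-2} are correct. For even $N$, note that the paper bypasses Fejes T\'oth entirely: since the repeated orthonormal basis minimizes $E_2$ whenever $d\mid N$, and $E_p\ge E_2$ for $p\le 2$, the bound $E_p(\mathbf A)\ge N(N-2)/2$ is immediate.

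The genuine gap is your plan for $1<p\le 1.3$ in the odd case. The piecewise-affine minorant (chord on $[0,\theta^\ast]$, tangent at $\theta^\ast$ on $[\theta^\ast,\pi/2]$) together with only the Fejes T\'oth constraint is provably too weak, and the failure is visible already at $N=3$, where the target is $(N-1)^2/2=2$. Evaluate your piecewise bound at the \emph{optimal} configuration itself, the repeated basis $x_2=x_3\perp x_1$: two ordered pairs sit at angle $0$ and four at angle $\pi/2$, saturating Fejes T\'oth. For $p=1.3$ one has $\theta^\ast\approx 1.34$, the tangent there has slope $\approx -0.81$, and its value at $\pi/2$ is about $-0.04$. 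Your lower bound therefore returns roughly $2\cdot 1+4\cdot(-0.04)\approx 1.84<2$. Since this configuration is feasible for your LP relaxation, the infimum of your linear form is strictly below $2$, and no amount of optimization over $|S_1|,|S_2|,\sum_{S_1}\theta_{ij},\sum_{S_2}\theta_{ij}$ can recover the bound. The same obstruction occurs for every $p>1$, because any affine minorant of $\cos^p$ valid near $\pi/2$ that is not identically zero must be negative at $\pi/2$; so the threshold is not ``where the tangent-plus-Fejes T\'oth coupling just barely suffices'' --- it never suffices once $p>1$. Your nonlinear alternative $\phi$ runs into the same wall: any pointwise minorant on all of $[0,\pi/2]$ vanishes at $\pi/2$, and the optimizer has many angles exactly there.

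What the paper does instead is split on a \emph{global} feature of the configuration rather than pair-by-pair. If every pairwise angle is at most $1.34$, the chord inequality $\cos^{1.3}t\ge 1-(2/\pi)t$ holds for all pairs simultaneously, and your Fejes T\'oth computation goes through verbatim. If some pair $x_1,x_2$ has angle exceeding $1.34$, the paper proves the planar inequality $|\langle x_1,x_i\rangle|^{1.3}+|\langle x_2,x_i\rangle|^{1.3}\ge 1$ for every other $x_i$ (crucially using that $x_1,x_2$ form the pair closest to orthogonal), strips off $x_1,x_2$, and inducts on odd $N$. This structural step is exactly the $\mathbb R^2$-specific information your relaxation discards.
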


\begin{proof}
Assume $\mathbf{A}$ is the Gram matrix of unit vectors $x_1, x_2, \ldots, x_N$ in the plane.

For any $p\in [0,2]$, the lower bound on $E_p$ for even $N$ follows immediately from the fact that a repeated orthonormal basis is one of the minimizers of $E_2$ when the number of vectors is divisible by the dimension (see \cite{ehl12,sid74,ven01}).

It is sufficient to prove the lower bound for $p=1.3$ so we consider this case only for the rest of the proof. For odd $N$, we split our proof into two parts: 1) angles between each pair of vectors are sufficiently far from $\pi/2$; 2) there are vectors that are almost orthogonal. For the first case, we assume that all angles $\arccos|\langle x_i, x_j\rangle |$, $1\leq i,j\leq N$, are no greater than $1.34$. It is straightforward to check that for any $t\in[0,1.34]$,
$$\cos^{1.3} t \geq \frac 2 {\pi} \left(\frac \pi 2 - t\right).$$

Summing these inequalities for all $t=\arccos|\langle x_i, x_j\rangle|$, $1\leq i,j \leq N$, $i\neq j$, we obtain

$$E_{1.3} (\mathbf{A})\geq N^2 - N - \frac 2 \pi \sum\limits_{i,j=1}^N \arccos|\langle x_i,x_j \rangle|.$$

Using the solution of the Fejes T\'{o}th problem from Theorem \ref{thm:fej-2} we conclude

$$E_{1.3} (\mathbf{A})\geq N^2 - N - \frac 2 \pi (N^2-1) \frac \pi 4 = \frac {(N-1)^2} 2.$$

For the second case, we assume that the largest angle among $\arccos|\langle x_i, x_j\rangle |$, $1\leq i,j\leq N$, is at least $1.34$. Without loss of generality, let one of such angles be $\arccos|\langle x_1, x_2\rangle |$. Our proof will be by induction on odd numbers $N$. The statement of the theorem is clearly true for $N=1$. Let $N$ be an odd number greater than 1. Now we will show that for any $i$, $3\leq i\leq N$, $|\langle x_1, x_i\rangle |^{1.3}+|\langle x_2, x_i\rangle |^{1.3}\geq 1$.

We can always switch a vector to its opposite without changing the total energy so we may assume that $x_i$ lies in the angle formed by $x_1$ and $x_2$. We assume the angle between $x_1$ and $x_2$ is $\theta$ and $x_i$ forms the angles of $\phi$ and $\theta-\phi$ with $x_1$ and $x_2$, respectively. Note that both $\phi$ and $\theta-\phi$ must be less than $\pi/2$, otherwise one of them is closer to $\pi/2$ than $\theta$. Without loss of generality, $\phi\leq\theta/2$. There are two possible options: $\theta\leq \pi/2$ and $\theta>\pi/2$.

For the first option, $$|\langle x_1, x_i\rangle |^{1.3}+|\langle x_2, x_i\rangle |^{1.3} = \cos^{1.3} \phi + \cos^{1.3} (\theta-\phi) \geq \cos^{1.3} \phi + \cos^{1.3} \left(\frac \pi 2-\phi\right)  \geq 1.$$


For the second option, $\theta\leq\pi-1.34$ because $\theta=\pi-\arccos|\langle x_1, x_2\rangle |$. The angle $\theta$ is the one closest to $\pi/2$ among all angles formed by the vectors. In particular, $\theta-\phi$ cannot be closer to $\pi/2$ so $\theta-\phi\leq \pi - \theta$. This condition can be rewritten as $\frac {\pi-\phi} 2 \geq \theta-\phi$. For the next step we try to minimize $\cos^{1.3} \phi + \cos^{1.3} \left(\theta-\phi\right)$ by keeping $\phi$ intact and increasing $\theta-\phi$ as much as possible while preserving the conditions $\theta-\phi\leq\pi-\phi-1.34$ and $\theta-\phi\leq \frac{\pi-\phi} 2$. While increasing $\theta$, at some moment we reach the point when one of these two inequalities becomes a precise equality. These two possibilities can be described by the two cases depending on the value of $\phi$.

For the first case, assume $\frac {\pi-\phi} 2 > 1.34$, i.e. $\phi<\pi-2.68$. Then

$$\cos^{1.3} \phi + \cos^{1.3} (\theta-\phi)\geq \cos^{1.3} \phi + \cos^{1.3} \left(\frac {\pi-\phi} 2\right) = \cos^{1.3} \phi + \sin^{1.3} \frac \phi 2.$$
The function $\cos^{1.3} \phi + \sin^{1.3} \frac \phi 2$ is at least 1 for $\phi\in[0, \pi-2.68]$ so the first case is covered.

For the second case, we assume $\frac {\pi-\phi} 2 \leq 1.34$, i.e. $\phi\geq\pi-2.68$. As we assumed earlier $\phi\leq\theta/2$, so $\phi\leq\frac {\pi-1.34} 2$. Using the inequality $\theta<\pi-1.34$ again we get that $\theta-\phi<\pi-1.34-\phi$. This implies

$$\cos^{1.3} \phi + \cos^{1.3} (\theta-\phi)\geq \cos^{1.3} \phi + \cos^{1.3} (\pi-1.34-\phi).$$
The function $\cos^{1.3} \phi + \cos^{1.3} (\pi-1.34-\phi)$ is at least 1 for $\phi\in [\pi-2.68,\frac {\pi-1.34} 2]$. Overall, we conclude that $|\langle x_1, x_i\rangle |^{1.3}+|\langle x_2, x_i\rangle |^{1.3} = \cos^{1.3} \phi + \cos^{1.3} (\theta-\phi)\geq 1.$

Then, by the induction hypothesis for unit vectors $x_3, \ldots, x_N$,

$$E_{1.3} (\mathbf{A})\geq \frac {(N-3)^2} 2 + 2\sum\limits_{i=3}^N (|\langle x_2, x_i\rangle |^{1.3}+|\langle x_1, x_i\rangle |^{1.3})$$

$$ \geq \frac {(N-3)^2} 2 + 2(N-2) = \frac {(N-1)^2} 2.$$ \end{proof}

We do not know how to extend this proof to the general case of matrices of rank 2. The value $p=1.3$ is not the best possible. One can impose all conditions necessary for the proof of Theorem \ref{thm:dim-2} to work and optimize for $p$. The numerical value obtained this way is approximately $1.317$.

\section{Discussion}\label{sec:disc}

Recently, the conjecture mentioned above from the paper \cite{che19} was proved true in \cite{xu19}. In particular, the range that the orthonormal basis plus a vector minimizes in Theorem~\ref{thm:d+1} can be increased to the value $p=\log{3}/\log{2}$. The behavior of the other maximal values of $p$ that similar constructions are expected to minimize for is suggested in the following conjecture (appearing first in \cite{par19}).

\begin{conj} \label{conj:onb} Let $N=m+kd$ points be given in $\mathbb{S}^{d-1}$, with $1\leq m<d$, $d\geq 2$, and a Gram matrix $\mathbf{A}\in\R^{N\times N}$. Then there is a value of $p_0$, independent of dimension $d$ and excess $m$, such that the repeated orthonormal sequence $\{e_{j \mod d} \}_{j=1}^N$ minimizes $E_p$ over all size $N$ systems of unit vectors (with value $E_p(\mathbf{A})=d(k^2-k)+2k$) for $p<p_0$ and the minimum value of $E_p(\mathbf{A})$  satisfies $E_p(\mathbf{A})<d(k^2-k)+2k$ when $p>p_0$. Further $p_0=p_0(k)$ satisfies $p_0(k)\rightarrow 2$ as $k\rightarrow \infty$.
\end{conj}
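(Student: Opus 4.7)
The plan is to generalize the method of Theorem~\ref{thm:d+1} to arbitrary $k\geq 1$. The starting point is Lemma~\ref{lem:min}, giving $E_p(\mathbf{A})\geq M(c,p,N)$ with $c=1/(N-d)=1/(m+(k-1)d)$. After the rescaling $u_i=ct_i$, the problem $M(c,p,N)$ becomes: minimize $\sum g(u_i)$ over $u_i\in[0,1)$ with $\sum u_i=N-d$, where $g(u)=(u/(1-u))^{p/2}$. Since $g$ is concave on $[0,(2-p)/2]$ and convex on $[(2-p)/2,1)$ for $p\in[1,2)$, the Karamata-style argument of the lemma preceding Theorem~\ref{thm:d+1} confines the minimizers of $M$ to the two types (i)~$j$ of the $u_i$ equal to the common value $(N-d)/j$ with the rest $0$, and (ii)~the analogous configuration with one additional smaller intermediate value.

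Within this framework, I would identify $p_0(k)$ as the largest $p$ for which the effective minimum of $M$ over admissible case (i) and (ii) configurations first meets (or is first overtaken by) the value $d(k^2-k)+2mk$. One would then demonstrate non-optimality of the repeated orthonormal basis for $p>p_0(k)$ by realizing the binding competitor as an explicit vector system, and deduce the asymptotic $p_0(k)\to 2$ from a perturbation expansion of the crossover equation in $2-p$, aiming for an estimate of the form $p_0(k)=2-O(1/k)$. The independence of $p_0(k)$ from $d$ and $m$ is expected because, for the $j$-values that dominate near the repeated orthonormal basis, the crossover equations reduce to expressions depending on $k$ alone through the ratios $(N-d)/j$.

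The principal obstacle is that Lemma~\ref{lem:min} fails to be sharp at the repeated orthonormal basis when $k\geq 2$. The tight frame representative has $u_i$ taking two distinct values $k/(k+1)$ and $(k-1)/k$, both in the convex regime of $g$; Jensen's inequality then forces $M(c,p,N)<d(k^2-k)+2mk$ strictly for every $p<2$. Already in the case $k=2$, $m=1$, $d=2$ one has $M(c,p,N)<d(k^2-k)+2mk$ at $p=1$, so the direct chain ``Lemma~\ref{lem:min} plus analysis of $M$'' cannot yield the full conjecture for $k\geq 2$. Closing this gap requires either a cluster-wise refinement of Lemma~\ref{lem:min} exploiting the regular-simplex structure of the sub-tight-frames (for instance, sharpening the step $\|\cdot\|_q\leq\|\cdot\|_2$ by inserting a factor involving the a priori support size $n-1$ of the vector $(\langle y_i,y_j\rangle)_{j\neq i}$ inside a size-$n$ cluster), or an entirely new ingredient such as a local interchange or monotonicity argument operating directly on the matrix $\mathbf{A}$. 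The $k=1$ case is special precisely because all clusters have size at most $2$, where the relevant H\"older step is automatically an equality; this luxury vanishes for $k\geq 2$, and a genuinely new tool appears to be required.
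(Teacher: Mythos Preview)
The statement you are attempting to prove is Conjecture~\ref{conj:onb}, which the paper states as an \emph{open conjecture}; there is no proof in the paper to compare your proposal against. The surrounding discussion in Section~\ref{sec:disc} explicitly says that even the special case $p_0(k)\to 2$ in dimension $d=2$ would be ``interesting to find,'' and Theorem~\ref{thm:d+1} covers only $k=1$.

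Your diagnosis of why the method of Theorem~\ref{thm:d+1} does not extend is accurate and, in fact, is the substantive content of your write-up. The chain $E_p(\mathbf{A})\geq M(1/(N-d),p,N)$ from Lemma~\ref{lem:min} is tight at the repeated orthonormal basis only when $k=1$: for $k\geq 2$ the dual tight frame of the repeated basis has norm-squares taking the two distinct values $k/(k{+}1)$ and $(k{-}1)/k$, both in the convex region of $g(u)=(u/(1-u))^{p/2}$, so Jensen's inequality gives a strict gap and $M(1/(N-d),p,N)<d(k^2-k)+2mk$ for every $p<2$. Your $k=2$, $m=1$, $d=2$ check is a correct concrete witness. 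Hence ``Lemma~\ref{lem:min} plus the Karamata/Jensen reduction'' cannot establish the conjecture for $k\geq 2$, and your proposal does not close this gap; it correctly identifies that a genuinely new ingredient is needed.

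Two minor remarks. First, the value $E_p(\mathbf{A})=d(k^2-k)+2k$ printed in the conjecture appears to be a typo for $d(k^2-k)+2mk$, as you implicitly use; your computation is the right one. Second, the part of your plan that hopes to deduce independence of $p_0$ from $d$ and $m$ via the structure of $M$ is moot once the bound is known to be slack: even if the minimum of $M$ had the desired $d,m$-independence, that says nothing about the true minimum of $E_p$. Any argument for the conjecture will have to operate at the level of $\mathbf{A}$ (or a sharpened dual inequality), not at the level of $M$.
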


Theorem \ref{thm:dim-2}, can be interpreted as an improvement to Theorem~\ref{thm:d+1} from the previous section, and partial progress towards the above conjecture. The parameter $p_0=1.3$ in Theorem \ref{thm:dim-2} is the same for all values of $k$. It might be interesting to find an elementary argument which showed that $p_0(k)\rightarrow 2$ in dimension $2$, let alone generally. 

Theorem~\ref{thm:d+1} and Conjecture~\ref{conj:onb} appear to be examples of a more general phenomenon. The direct analogs/extensions of orthonormal bases are regular simplices in the non-projective setting and maximal ETFs in projective spaces. In the first case, the analogous potential function is $f^{\Delta}_{d,p}(t)=|t+\frac 1 d|^p$. In the second case, one can view the potential function $f(t)=|t|^{p}$ as an instance of the more general function $f_{\alpha,p}(t)=|t^2-\alpha^2|^{p}$, and the orthonormal basis as an example of an equiangular tight frame ($|\langle x,y \rangle|=\alpha$, for $x\neq y$ with coherence $\alpha=0$). The problem of minimizing the energies $E^{\Delta}_{d,p}$ and $E_{\alpha,p}$, associated with $f^{\Delta}_{d,p}$ and $f_{\alpha,p}$, respectively, for $p$ close to $0$ might be expected to pick out repeated regular simplices and repeated ETFs with coherence $\alpha$. We conjecture generally:

\begin{conj} \label{conj:etf} 

\begin{enumerate}
\item[(i)] Let $\mathbf{A}\in\R^{N\times N}$ be the Gram matrix of $N=m+k(d+1)$ points given in $\mathbb{S}^{d-1}$, with $1\leq m<d+1$, $d\geq 2$, and let $\{\varphi_i\}_{i=1}^l\subset \R^{d}$ be the maximal regular simplex in $\mathbb{S}^{d-1}$. Then there is a value of $p_0>0$, such that the repeated regular simplex $\{\varphi_{j \mod l} \}_{j=1}^N$ minimizes $E^{\Delta}_{d,p}$ over all size $N$ systems of unit vectors for $p<p_0$.

\item[(ii)] Let $\mathbf{A}\in\R^{N\times N}$ be the Gram matrix of $N=m+kl$ points given in $\mathbb{S}^{d-1}$, with $1\leq m<l$, $d\geq 2$, and $l$ the size of an ETF with coherence $\alpha$, $\{\varphi_i\}_{i=1}^l\subset \R^{d}$. Then there is a value of $p_0>0$, such that the repeated ETF sequence $\{\varphi_{j \mod l} \}_{j=1}^N$ minimizes $E_{\alpha,p}$ over all size $N$ systems of unit vectors for $p<p_0$.

\end{enumerate}
\end{conj}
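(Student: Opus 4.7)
The plan is to attack Conjecture~\ref{conj:etf} by passing to the $p \to 0^+$ limit, proving a combinatorial extremal statement there, and then transferring back to small positive $p$. Since $|t - t^\ast|^p \to \mathbf{1}_{\{t \neq t^\ast\}}$ pointwise on any bounded set as $p \to 0^+$, one has
\[ E^{\Delta}_{d,p}(\mathbf{A}) \xrightarrow{p\to 0^+} \bigl| \{ (i,j) : i \neq j,\ A_{ij} \neq -1/d \} \bigr|, \]
and similarly $E_{\alpha,p}(\mathbf{A})$ converges to the count of pairs with $|A_{ij}| \neq \alpha$. A direct enumeration on the repeated simplex (resp.\ repeated ETF) configuration of $N = m + k(d+1)$ (resp.\ $N = m+kl$) vectors supplies the candidate optimal value; for part~(i) this is $k(k-1)(d+1)$ plus a boundary contribution from the $m$ extra vectors.

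The main extremal step is then the following combinatorial claim: define the \emph{equality graph} $G(\mathbf{A})$ on $[N]$ by declaring $ij$ an edge when $A_{ij} = -1/d$ (resp.\ $|A_{ij}| = \alpha$). Cliques in $G(\mathbf{A})$ are regular simplices (resp.\ ETFs) in $\R^d$, hence of size at most $d+1$ (resp.\ $l$). In the repeated simplex configuration $G(\mathbf{A})$ is a complete $(d+1)$-partite graph (parts indexed by simplex vertex) together with all edges within each part (since repeated vectors coincide in $\R^d$), and one must show this structure maximizes $|E(G)|$ subject to the rank constraint $\operatorname{rank}(\mathbf{A}) \leq d$. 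The natural tool is the tight frame dual of Lemma~\ref{lem:frame}: the rank bound translates into a frame identity in $\R^{N-d}$ which should force a form of sparsity on the equality graph and rule out configurations exceeding the repeated-simplex edge count.

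For the passage from the limit back to small positive $p$, one uses that the combinatorial energy $E_{0^+}$ is integer-valued with a gap between minimizers and non-minimizers. Together with compactness of the unit-diagonal rank-$d$ Gram matrix space, this should yield a threshold $p_0 > 0$ below which the repeated simplex retains its minimizer status. A more quantitative route adapts the auxiliary optimization $M(c,p,N)$ of Section~\ref{sec:aux} to the shifted potentials $f^{\Delta}_{d,p}$ and $f_{\alpha,p}$ through Lemmas~\ref{lem:frame} and~\ref{lem:min}, replacing $f_{c,p}$ by the corresponding shifted analog, in the hope of producing an explicit and dimension-independent $p_0$ as predicted in Conjecture~\ref{conj:onb}.

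The chief obstacle is the combinatorial extremal step. Unlike the orthonormal-basis case of Theorem~\ref{thm:d+1}, where equality-graph cliques have size only $d$ and linear independence organizes the structure cleanly, the simplex/ETF case admits large overlapping cliques (since cross-copy inner products again land at the extremal value $-1/d$ or $\pm\alpha$), and the rank constraint must be exploited more subtly. A secondary obstacle, in the transfer step, is controlling perturbations of the simplex that open up many new but small non-extremal inner products: although each such term contributes $|v|^p \to 1$ as $p\to 0^+$, making this uniform across the compact configuration space requires a careful stability estimate, likely via a lower semicontinuity argument for $E_{0^+}$ combined with a quantitative refinement near the minimizer.
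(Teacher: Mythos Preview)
First, a framing point: the statement you are attempting is labeled a \emph{conjecture} in the paper and is not proved there. The only support the paper offers is Theorem~\ref{thm:cont-energy}, which shows that the \emph{continuous} energies $I^{\Delta}_{d,p}$ and $I_{\alpha,p}$ (for maximal ETFs) are minimized by the relevant uniform measures for $p\in(0,2]$. So there is no ``paper's proof'' to compare against, and your proposal should be read as an attack on an open problem.

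Your plan has two substantive gaps. The first is in the combinatorial step. Your description of the equality graph for the repeated simplex is wrong: repeated vectors have inner product $1\neq -1/d$, so there are \emph{no} edges inside a part, and $G(\mathbf{A})$ is just the balanced complete $(d{+}1)$-partite graph. That is actually good news for part~(i): the clique bound of $d{+}1$ together with Tur\'an's theorem immediately gives the extremal statement at $p=0^+$, without needing Lemma~\ref{lem:frame}. For part~(ii), however, your clique bound is false. A clique in $G(\mathbf{A})$ is only an \emph{equiangular} set with angle $\alpha$, not an ETF; its size is governed by Gerzon's bound $\binom{d+1}{2}$, not by $l$, unless the ETF is maximal. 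So the Tur\'an approach collapses for generic ETFs, and you would need a genuinely different argument using the tight-frame structure.

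The second gap is the transfer step, which you flag but underestimate. The functional $E_{0^+}$ is only lower semicontinuous and the convergence $E_p\to E_{0^+}$ is \emph{not} uniform over the compact space of Gram matrices: for any fixed $p>0$ one can perturb a high-$E_{0^+}$ configuration so that many inner products sit at $-1/d+\varepsilon$, making $E_p$ arbitrarily small while $E_{0^+}$ stays large. Thus the integer gap at $p=0^+$ does not propagate to a gap at any positive $p$ via compactness alone. What is actually needed is a local stability estimate showing that near the repeated simplex the energy increases, combined with a global bound away from it; neither follows from your outline, and adapting the $M(c,p,N)$ machinery to the shifted potentials $f^{\Delta}_{d,p}$, $f_{\alpha,p}$ is not straightforward because the zero of the potential is now interior to $[-1,1]$, so the H\"older/kernel argument of Lemma~\ref{lem:min} does not transfer directly.
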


Conjectures \ref{conj:onb} and \ref{conj:etf} emphasize the possibility of repeated configurations minimizing $p$-frame or $p$-frame-type energies among sets of $N$ points for all large enough $N$. This property may be seen as a strong version of stability of an optimizing set. 

The collections of unit vectors $\Phi=\{\varphi_i\}_{i=1}^l\subset \R^{d}$ that, for all $N\geq l$, have a repeated set $\{\varphi_{j \mod l} \}_{j=1}^N$ minimizing the energy defined by the potential function $f$ among all $N\times N$ Gram matrices $\mathbf{A}$, also satisfy that the uniform distribution over $\Phi$ must solve the problem,
\begin{equation}\label{eq:conten}\min\limits_{\mu\in\mathcal{P}(\S^{d-1})} I_f(\mu)=\min\limits_{\mu\in\mathcal{P}(\S^{d-1})} \int_{\S^{d-1}}\int_{\S^{d-1}} f(\langle x, y\rangle)d\mu(x)d\mu(y),\end{equation}
 where  $\mathcal{P}(\S^{d-1})$ are Borel probability measures, $\mu(\S^{d-1})=1$. For the case of $p$-frame energies, {\it tight designs} are examples of configurations such that uniform distributions over them minimize $I_f(\mu)$ over ranges of $p$ \cite{bil19a}. This behavior is slightly different from that conjectured above, as repeated tight designs of size $l$ can only be expected to minimize the discrete energies when $N=kl$, generally. 

The existence of a $p_{0}$ in these conjectures might be expected in connection with ideas from the field of {\it compressed sensing}. One should expect that, as $p\rightarrow 0$, the solution to this minimization problem is a repeated ETF (as upon vectorizing the Gram matrix, and considering the difference, the sparsest difference arises this way), but Conjecture~\ref{conj:etf} strengthens this to say that the solution for $p$ sufficiently small is also a repeated ETF. 

In connection with these observations we collect further support for the above conjectures for maximal ETFs and the regular simplex, by showing they minimize the associated continuous energies (\ref{eq:conten}). The second part of the below theorem holds also with the coherence replaced with the corresponding value on a complex maximal ETF, these being known alternatively as {\it symmetric informationally complete positive operator-valued measures} (SIC-POVMs, see \cite{fuchs17} for more details). We give the proof only in the real case, but the same type of argument applies in the complex case.

\begin{theorem} \label{thm:cont-energy} The following statements hold:

\begin{enumerate}

\item[(i)] The uniform distribution over the vertices of a regular simplex $\{\varphi_i\}_{i=1}^{d+1}\subset \S^{d-1}$ minimizes the continuous energy $I^{\Delta}_{d, p}$ for $f^{\Delta}_{d,p}(t)=|t+\frac{1}{d}|^p$ for all $p\in(0,2]$.

\item[(ii)] Whenever a maximal ETF exists, $\{\varphi_i\}_{i=1}^M\subset \S^{d-1}$ (with coherence $\alpha^2=\frac{1}{d+2}$), the uniform distribution over its points minimizes the continuous energy $I_{\alpha,p}$ for $f_{\alpha,p}(t)=|t^2-\alpha^2|^p$ for all $p\in(0,2]$.
\end{enumerate}

\begin{proof}

Note that the inequalities

\begin{equation}\label{eq:ineqs} \left|\frac{t+\frac{1}{d}}{1+\frac{1}{d}}\right|^p \geq \left|\frac{t+\frac{1}{d}}{1+\frac{1}{d}}\right|^2\ \text{ and }\ \left|\frac{t^2-\frac{1}{d+2}}{1-\frac{1}{d+2}}\right|^p \geq \left|\frac{t^2-\frac{1}{d+2}}{1-\frac{1}{d+2}}\right|^2, \end{equation}
hold for $t\in[-1,1]$. The first inequality implies $I^{\Delta}_{d,p}\geq (1+\frac 1 d)^{p-2} I^{\Delta}_{d,2}$ and the equality holds for the uniform distribution over the regular simplex. The second inequality implies $I_{\alpha,p}\geq (1-\frac 1 {d+2})^{p-2} I_{\alpha,2}$ and the equality holds for the uniform distribution over a maximal ETF. It is sufficient then to prove the theorem for $p=2$.

For the proof in the case $p=2$, we use the notion of {\it positive definite functions} on unit spheres. A function of inner products is positive definite on $\S^{d-1}$ if for any collection of points $\{ x_{i}\}_{i=1}^N \subset \S^{d-1}$ and sequence of complex numbers $\{c_{i}\}_{i=1}^N\subset\C$ it is true that
$$\sum\limits_{i,j=1}^{N} c_i\overline{c_j}f(\langle x_i, x_j \rangle)\geq 0.$$
This condition implies that for any positive definite $f$ and any measure $\mu$ on $\S^{d-1}$,
$$I_f(\mu)=\int_{\S^{d-1}}\int_{\S^{d-1}} f(\langle x, y\rangle)d\mu(x)d\mu(y)\geq 0.$$

Positive definite functions on spheres were characterized by Schoenberg \cite{sch41}. In particular, $t$, $t^2-\frac 1 d$, and $t^4-\frac 6 {d+4} t^2+ \frac 3 {(d+2)(d+4)}$ (Gegenbauer polynomials of degrees 1, 2, 4) are positive definite functions on $\S^{d-1}$.

Since $(t+\frac 1 d)^2= (t^2-\frac 1 d) + \frac 2 d t + \frac {d+1} {d^2}$,
$$I^{\Delta}_{d,2} (\mu) \geq \frac {d+1} {d^2}.$$
It is easy to check that for the uniform distribution over the regular simplex, $I^{\Delta}_{d,2}$ is precisely $\frac {d+1} {d^2}$.

Using $$\left(t^2-\frac 1 {d+2}\right)^2 = \left(t^4-\frac 6 {d+4} t^2+ \frac 3 {(d+2)(d+4)}\right) + \frac {4(d+1)} {(d+2)(d+4)} \left(t^2-\frac 1 d\right) + \frac {2(d+1)} {d(d+2)^2},$$
we conclude that
$$I_{\alpha,2} (\mu) \geq \frac {2(d+1)} {d(d+2)^2}.$$
Again it is easy to check that the equality is attained on the uniform distribution over the maximal ETF.
\end{proof}

\end{theorem}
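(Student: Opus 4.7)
The plan is to reduce both parts to the case $p=2$ by a uniform pointwise inequality, and then to handle $p=2$ via Schoenberg's characterization of positive definite functions on $\S^{d-1}$.

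For part (i), on $[-1,1]$ the quantity $|t+1/d|$ lies in $[0,1+1/d]$, and for $u\in[0,1]$ and $p\in(0,2]$ one has $u^p\ge u^2$. Setting $u = |t+1/d|/(1+1/d)$ yields
\[
\left|t+\frac{1}{d}\right|^p \ge \left(1+\frac{1}{d}\right)^{p-2}\left|t+\frac{1}{d}\right|^2,
\]
with equality only when $t\in\{-1/d,1\}$. These are exactly the inner product values realized by a regular simplex $\{\varphi_i\}_{i=1}^{d+1}$, so after integrating against $\mu\otimes\mu$ the problem for $p\in(0,2]$ reduces to the problem for $p=2$, with the uniform measure on the simplex a minimizer of both. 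An analogous reduction applies in part (ii) from the inequality $|t^2-\alpha^2|\le 1-\alpha^2$ on $[-1,1]$, whose equality cases $t^2\in\{\alpha^2,1\}$ are precisely the inner product values attained on a maximal ETF.

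It remains to treat $p=2$. For part (i), expand
\[
\left(t+\frac{1}{d}\right)^2 = \left(t^2-\frac{1}{d}\right) + \frac{2}{d}\,t + \frac{d+1}{d^2}.
\]
Both $t$ and $t^2-1/d$ are Gegenbauer polynomials of degrees $1$ and $2$ on $\S^{d-1}$, and hence positive definite by Schoenberg's theorem; their energy integrals are nonnegative for any probability measure, yielding $I^{\Delta}_{d,2}(\mu)\ge (d+1)/d^2$. Sharpness on the uniform measure over the simplex follows by direct computation, noting that the cross-term $2t/d$ integrates to zero since the simplex is balanced. For part (ii), I would seek a decomposition
\[
\left(t^2-\frac{1}{d+2}\right)^2 = \lambda_4\left(t^4-\frac{6}{d+4}t^2+\frac{3}{(d+2)(d+4)}\right) + \lambda_2\left(t^2-\frac{1}{d}\right) + \lambda_0,
\]
whose first two summands are positive definite on $\S^{d-1}$. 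Provided $\lambda_2,\lambda_4\ge 0$, the lower bound $I_{\alpha,2}(\mu)\ge \lambda_0$ follows, and equality on a maximal ETF is automatic since $\langle \varphi_i,\varphi_j\rangle^2\in\{\alpha^2,1\}$ makes the integrand vanish off the diagonal.

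The main obstacle is the explicit Gegenbauer expansion in part (ii). Matching coefficients of $t^4$, $t^2$, and $1$ produces a small linear system whose solution I expect to read $\lambda_4=1$, $\lambda_2 = 4(d+1)/((d+2)(d+4))\ge 0$, and $\lambda_0 = 2(d+1)/(d(d+2)^2)$; nonnegativity of $\lambda_2$ and $\lambda_4$ is then immediate, while the rational arithmetic determining $\lambda_0$ is the place where errors could creep in. No other step requires more than routine checking.
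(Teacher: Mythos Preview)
Your proposal is correct and follows essentially the same route as the paper: the same pointwise reduction from $p\in(0,2]$ to $p=2$ via $u^p\ge u^2$ for $u\in[0,1]$, and then the same Gegenbauer decompositions, with your anticipated coefficients $\lambda_4=1$, $\lambda_2=4(d+1)/((d+2)(d+4))$, $\lambda_0=2(d+1)/(d(d+2)^2)$ matching the paper exactly. The only cosmetic difference is that you justify sharpness via the design/balance properties of the simplex and ETF rather than by a bare computation, which is equally valid.
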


Using the design conditions, one can also show that the configurations from Theorem \ref{thm:cont-energy} are unique minimizers (up to the uniqueness of maximal ETFs in the second case) for the corresponding energies when $p\in(0,2)$, similarly to how it is done in \cite{bil19a} for the general case of tight designs and $p$-frame energies. The second part of Theorem \ref{thm:cont-energy} relies on the existence of maximal ETFs. It would be interesting to study minimizers in the case when this existence is not granted. We expect that properties of minimizing measures are similar to those for $p$-frame energies \cite{bil19b}

\section{Acknowledgments}

The authors would like to thank Wei-Hsuan Yu for fruitful discussions, Boris Bukh and Chris Cox for sharing the manuscript and discussing their results, Kasso Okoudjou for talking about his and his coauthors' research in this direction and informing about their conjectures, and anonymous referees for their help with the text. This paper is based upon work supported by the National Science Foundation under Grant No. DMS-1439786 while the first author was in residence at the Institute for Computational and Experimental Research in Mathematics in Providence, RI, during the Spring 2018 semester. The second author was supported in part by grant from the US National Science Foundation, DMS-1600693.

\bibliographystyle{amsalpha}

\end{document}